\theoremstyle{definition}
\newtheorem{definition}{Definition}%Extra square-bracket argument achives that the numbering is the same as for definition (single uniform counter). 
\theoremstyle{theorem}
\newtheorem{proposition}[definition]{Proposition}
\newtheorem{lemma}[definition]{Lemma}
\newtheorem{theorem}[definition]{Theorem}
\newtheorem{corollary}[definition]{Corollary}
\numberwithin{equation}{section}
\numberwithin{definition}{section}
\theoremstyle{remark}
\newtheorem{remark}[definition]{Remark}
\newtheorem{example}[definition]{Example}
\newtheorem{examples}[definition]{Examples}
\def\PP{\mathsf P}
\def\MM{\mathsf M}
\def\QQ{\mathsf Q}
\def\BB{\mathcal B}
\def\AA{\mathcal A}
\def\EE{\mathcal E}
\def\GG{\mathcal G}
\def\HH{\mathcal H}
\def\FF{\mathcal F}
\def\LL{\mathcal L}
\def\Opt{\mathcal{O}}
\def\Prog{\mathrm{Prog}}
\begin{document}
\title{Independence times for iid sequences, random walks and L\'evy processes}
\author{Matija Vidmar}
\address{Department of Mathematics, University of Ljubljana, Slovenia}
\address{Institute for Mathematics, Physics and Mechanics, Ljubljana, Slovenia}
\email{matija.vidmar@fmf.uni-lj.si}
%For a sequence in discrete time having stationary independent values (respectively,  random walk) X, those random times R of X are characterized set-theoretically, for which the strict post-R sequence (respectively, the process of the increments of X after R) is independent of the history up to R. For a Lévy process X and a random time R of X, reasonably useful sufficient conditions and a partial necessary condition on R are given, for the process of the increments of X after R to be independent of the history up to R. 
\begin{abstract}
For a sequence in discrete time having stationary independent values (respectively,  random walk) $X$, those random times $R$ of $X$ are characterized set-theoretically, for which the strict post-$R$ sequence (respectively, the process of the increments of $X$ after $R$) is independent of the history up to $R$. For a L\'evy process $X$ and a random time $R$ of $X$, reasonably useful sufficient conditions and a partial necessary condition on $R$ are given, for the process of the increments of $X$ after $R$ to be independent of the history up to $R$. 
\end{abstract}

\thanks{The author acknowledges financial support from the Slovenian Research Agency (research core funding No. P1-0222). %He thanks the two Referees, whose comments and suggestions have helped to significantly improve the presentation of this paper; and he is particularly grateful to one of the Referee's for spotting an error in the proof of Theorem~\ref{theorem} (in an earlier draft of this paper).
}

\keywords{Processes with stationary independent values; processes with stationary independent increments; independence; strong Markov property; path decompositions}

\subjclass[2010]{60G10; 60G50; 60G51} 

\maketitle

 \section{Introduction}\label{section:intro}
\subsection{Motivation and overview of results}\label{subsection:motivation..}
We begin with some motivation and a synopsis of the results, introducing along the way a number of important concepts and pieces of notation.

Let $X$ be an $\mathbb{R}^d$-valued process with stationary independent increments (PSII) vanishing a.s. at zero: a random walk in discrete time or a c\`adl\`ag L\'evy process in continuous time. Now  define the process $\blacktriangle X$ and, for a random time $R$, on $\{R<\infty\}$, the processes $\Delta_RX$, $\theta_RX$, and the $\sigma$-field $\FF_R'$, by setting $(\Delta_RX)_t:=X_{R+t}-X_R$, $(\theta_RX)_t:=X_{R+t}$, and $\FF'_R$ equal to the $\sigma$-field generated by the random variables $Z_R$ with $Z$ adapted (optional) in discrete (continuous) time, finally $(\blacktriangle X)_t:=\Delta_{t}X$ (with $t$ in $\mathbb{N}_0$ or in $[0,\infty)$, as the case may be). Let us call a random time $R$ an independence (respectively, a regenerative; a Markov) time for $X$ if, on $\{R<\infty\}$, the process $\Delta_RX$ of the increments of $X$ after $R$ is independent of the history $\FF'_R$ up to $R$ (respectively, \emph{and} $\Delta_RX$ is equally distributed as $X$; given $X_R$, the post $R$-process $\theta_RX$ is distributed as $X$ started at $X_R$ and independent of $\FF_R'$). 

\begin{remark}\label{remark:distinction}
The process $\blacktriangle X$ is not yet immediately relevant to us, however it will prove useful later on. Note that $\blacktriangle X$ is a process with values in the space of $\mathbb{R}^d$-valued paths (again in discrete or continuous time, as appropriate), whereas, for a random time $R$, $\Delta_RX$ is simply a process with values in $\mathbb{R}^d$.
\end{remark}

Recall also that by definition a random time $R$ is a stopping time iff $\{R\leq t\}$ is from the past up to time $t$ for all times $t$; in discrete time it is equivalent if $\{R=t\}$ replaces $\{R\leq t\}$ in the preceding. 

It is then well-known that all stopping (but in general not all independence) times are regenerative/Markov \cite[%p. 69,
Theorem~3.1.2.1]{khoshnevisan} \cite[%p. 278, 
Theorem~40.10]{sato}. It is less well-known, and somewhat remarkable, that the regenerative/Markov property already characterizes stopping times in the class of random times that are measurable with respect to $X$: under certain conditions see  \cite[Corollary~4.3]{pittenger-birth} \cite[Remark~4.14]{atkinson} in continuous  and \cite[Lemma~3.12]{jacobsen} in discrete time; we show below that this holds true in fact in full generality. 

The main contribution of this paper, however, is an investigation of independence times. Briefly, our results provide a set-theoretic characterization of (respectively, some sufficient and necessary conditions for) independence times of random walks (respectively, L\'evy processes). This then provides insight into what happens when, for a random time $R$, only the independence of $\Delta_R X$ from $\FF'_R$ is asked for, but the requirement of equality in law $\Delta_RX\sim X$ is dropped -- a natural query, given the basic importance of the property of independence in probability, and a situation that one encounters for instance with splitting at the maximum/minimum in the fundamental Wiener-Hopf factorization of PSIIs (see Examples~\ref{example:WH},~\ref{example:thin-times:W-H} and~\ref{example:at-sup} below). \label{WH:modi}

In greater detail, in the discrete time case, note that the sequence of the consecutive increments of $X$, $\Delta X:=(X_{n}-X_{n-1})_{n\in \mathbb{N}}$, has in fact stationary independent values and generates up to a completion the same filtration as $X$. In this way the investigation of independence times for $X$ is reduced to the study of the analogous times for the sequence  $\Delta X$.  This reduction, besides helping to discern what is really relevant to the argument, from what is just unnecessary `background noise', is also advantageous in that it allows the state space to become an arbitrary measurable space, enlarging the scope of the results. Thus, in Section~\ref{section:discrete}, for a sequence with stationary independent values $Y=(Y_i)_{i\in \mathbb{N}}$ taking values in an arbitrary measurable space, we characterize those random times $R$, measurable with respect to $Y$, which render the strict post-$R$ sequence $\Box_RY:=(Y_{R+i})_{i\in \mathbb{N}}$  independent of the past up to $R$  (Theorem~\ref{theorem}). Noting that a.s. for each $n\in \mathbb{N}_0$, $X_n=\sum_{i=1}^n(\Delta X)_i$, $(\Delta_R X)_n=\sum_{i=1}^n(\Box_R\Delta X)_i$ and for each $n\in \mathbb{N}$, $(\Delta X)_n=X_n-X_{n-1}$, $(\Box_R\Delta X)_{n}=(\Delta_R X)_n-(\Delta_R X)_{n-1}$, this becomes then at the same time a characterization of independence times for the random walk $X$: loosely speaking, we show that a random time $R$ \emph{of} $X$ (i.e. measurable with respect to $X$)  is an independence time for $X$, precisely when
 \begin{equation}\label{equation}
\{R=n\}=F_n\cap \{\Delta_nX\in \Gamma\}\text{ a.s. for all }n\in \mathbb{N}_0
\end{equation}
for some measurable $\Gamma$  in the path space and for some $F_n$ from the past up to time $n$, $n\in \mathbb{N}_0$. 

There is no analogue of such a reduction in continuous time and the investigation of the independence times of L\'evy processes proves much more involved. %; still, several meaningful results can be obtained. In particular, t
%Still, taking for $\FF'_R$ the $\sigma$-field generated by $Z_R$, $Z$ optional, 
Notably, we provide in Section~\ref{section:cts}, sufficiency  (Proposition~\ref{proposition:necessity}\ref{proposition:necessity:ii}, dealing with thin random times, i.e. random times whose graphs are included, up to evanescence,  in the union of a denumerable family of stopping times; Proposition~\ref{proposition:sufficient:two}, dealing with strict random times, i.e. random times, whose graphs intersect the graph of any stopping time in an evanescent set only) and partial necessity (Proposition~\ref{proposition:necessity}\ref{proposition:necessity:i}) results on the independence time property. They consist in  adapting \eqref{equation} to the continuous-time setting. %This adaptation is particularly delicate when the graph of $R$ intersects the graph of any stopping time in an evanescent set only. 
To give an informal flavor of this (I) up to technicalities, the sufficient condition for a random time $R$ to be an independence time, in the more intricate case when $R$ is strict, becomes:
\begin{equation}\label{eq:ii}
\{R\leq t\}=\int_{[0,t]}O_sdA_s \text{ for all }t\in [0,\infty)\text{ a.s.},
\end{equation}
with $O$ an optional process and with the increments of the nondecreasing right-continuous process $A$ after a given deterministic  time ``depending'' only on the increments of $X$ after that time (in words, \eqref{eq:ii} is saying that a.s. the stochastic interval $\llbracket R,\infty\rrparenthesis$ can be written as the integral of the process $O$ against the process $A$ in the usual pathwise Lebesgue-Stieltjes sense); whilst (II) thin independence times are characterized in a manner analogous to \eqref{equation}, with the deterministic times $n$ appearing in \eqref{equation} replaced by a suitable sequence of stopping times -- a condition that may also be written in the form ($\llbracket R\rrbracket$ being the graph of the thin random time $R$ of $X$):
\begin{equation}\label{eq:iii}
\llbracket R\rrbracket =O\cap \{\blacktriangle X\in \Gamma\}\text{ up to evanescence},
\end{equation}
for an optional set $O$ and a measurable set $\Gamma$ in the path space. 

\begin{remark}\label{remark:extra-explanation}
Note here that we indulge in the usual confusion between a process as a family of random variables (with values in some measurable space), defined on the underlying sample space $\Omega$, and indexed by time, on the one hand, and a process as a function defined on $\Omega\times [0,\infty)$ in continuous time, viz. on $\Omega\times \mathbb{N}_0$ in discrete time (and with values in that same measurable space), on the other. In particular, in \eqref{eq:iii}, $\blacktriangle X$ is interpreted as a map defined on $\Omega\times [0,\infty)$ (with values in path space, cf. Remark~\ref{remark:distinction}); accordingly $ \{\blacktriangle X\in \Gamma\}$ is a subset of $\Omega\times [0,\infty)$ (as are $O$ and the graph $\llbracket R\rrbracket$). In general context will always make it clear which of the two interpretations is intended.
\end{remark}

We conjecture that \eqref{eq:iii} is in fact also a characterization of independence times in the general case (i.e. when the random time $R$ of $X$ is not necessarily thin). 

Besides their theoretical appeal, conditions \eqref{equation}-\eqref{eq:ii}-\eqref{eq:iii} listed above are useful in application, for checking the independence time property: several illustrative examples of this are given in the text. %In the case of thin random times, we leave open the problem of finding a meaningful characterization or better (more easily applicable) sufficient conditions.

Finally, in a minor contribution, for PSIIs,  we establish (as we have already remarked) the characterization of regenerative/Markov times as stopping times \emph{under no assumptions on the underlying space}  (Corollary~\ref{corollary:characterization-of-ST-discrete} and Theorem~\ref{theorem:characterization:STs:cts}). 

\subsection{Literature overview}\label{subsection:lit-overview}
This study falls into the %more general
 context of birth, death, splitting and conditional independence times of Markov processes, together with their associated path decompositions. The literature in this area is considerable: \cite[Chapter~12]{chung} \cite[Section III.9]{syski} \cite{pittenger,pittenger-sharpe,meyer,millar,jacobsen,pittenger-birth,pittenger-shih,jacobsen-solo,jacobsen-coterminal,getoor-sharpe-cooptional,sharpe-getoor} is an incomplete list. 

 In particular, and as most closely related to our investigation of independence times, random times $\tau$ of a (temporally homogeneous) Markov process $Z$ have been studied, for which  --- speaking somewhat loosely, but giving the correct flavor --- the future of $Z$ after $\tau$ is independent of the history up to $\tau$, conditionally on the $\tau$-present. Let us agree to call such $\tau$ conditional independence times for $Z$. Then in discrete time, assuming $Z$ is valued in a countable state space $J$, and defined on the canonical space of $J$-valued sequences, \cite{jacobsen} gives a set-theoretic characterization of conditional independence times \cite[Lemma~3.12]{jacobsen} (with the $\tau$-present being $Z_\tau$, the $\tau$-future being $Z_{\tau+\cdot}$ and the $\tau$-past being the $\sigma$-field generated by $W_\tau$ with $W$ adapted). In continuous time (once the notions of the `$\tau$-present' and `$\tau$-past'  %and `independence given $\tau$-present' 
have been given a precise meaning) \cite{getoor-sharpe-cooptional,sharpe-getoor} provide several sufficient conditions for a random time to be a conditional independence time. 

%Now, in the specific context of PSIIs/i.i.d. sequences, independence times are just as, or perhaps more natural/relevant, than conditional independence times. Besides

\emph{But}, for a PSII $X$ and a random time $R$, the independence, on $\{R<\infty\}$, of $\Delta_RX$ from $\FF_R'$ is \emph{not} synonymous with the  independence, again on $\{R<\infty\}$, of $\theta_RX$ from $\FF'_R$ \emph{given} the $R$-present $X_R$.  Routine conditional independence calculations demonstrate that the first implies the latter (as long as $X_R$ is $\FF'_R$-measurable), but the converse fails in general  (even when $R$ is measurable with respect to $X$ -- see Example~\ref{example:not-equivalent}). Indeed, in the penultimate statement, ``the $R$-present $X_R$'' may be replaced by any $\sigma$-field contained in $\FF'_R$ and with respect to which $X_R$ is measurable (with the same relevant counter-example). Hence, unless ``the $R$-present'' is trivial, the above described  `conditional independence' results of the Markov theory do not apply directly to the investigation of whether or not $R$ is an independence time for $X$, and our results are a meaningful complement. (In special cases, indirectly, the conditional independence results may still be brought to bear even when $X_R$ is not trivial -- see Remark~\ref{remark:markov-connect}.)

Furthermore, while we consider only PSIIs/i.i.d. sequences and not more general Markov processes, the upshot is that we are able to provide the results in what is otherwise complete generality, in particular without assuming the presence of the usual Markov setup or even canonical realizations. Indeed, for this reason, we briefly include below the characterization, for PSIIs, of stopping times as regenerative/Markov times: there is no particular claim to originality here, but it is still reassuring to know, viz. the settings of \cite{pittenger-birth,atkinson,jacobsen}, that no special assumptions on the space are needed in order for this to hold true. 

Finally, we may also mention the investigation of \cite{last}: there, for a two-sided Brownian motion $(B_t)_{t\in \mathbb{R}}$, those random times $T$, measurable with respect to $B$, are characterized, for which $(B_{T+t}-B_T)_{t\in \mathbb{R}}$ is again a two-sided Brownian motion independent of $B_T$. 

\subsection{Setting} We fix, once and for all, a (not necessarily complete) probability space $(\Omega,\GG,\PP)$. The qualifiers `a.s.', `negligible', `evanescent', `with a positive probability', `independent', `law' and `completion' without reference to a probability measure, refer to the measure $\PP$. In particular, by the  completion of a  sub-$\sigma$-field $\HH$ of $\GG$, denoted $\overline{\HH}$, we shall mean the $\sigma$-field generated on $\Omega$ by $\HH$ and the negligible sets of $\GG$. The completion of a filtration is got by completing in this manner each member thereof. 

\subsection{Miscellaneous notation} \label{misc-notation} We gather here, for easy reference, some general notation used throughout the text. 

For a topological space $U$, $\mathcal{B}_U$ is its Borel $\sigma$-field (under the standard topology for the given $U$ that we will encounter). $2^U$ denotes the power set of a set $U$. The intersection of two sets $a$ and $b$ will be denoted multiplicatively: $ab:=a\cap b$. 

For measurable spaces $(A,\AA)$ and $(B,\BB)$, $\AA/\BB$ denotes the collection of $\AA$/$\BB$-measurable maps from $A$ into $B$. $\sigma_A(\ldots)$ stands for the smallest $\sigma$-field on $A$ with respect to which whichever maps that stand in between the parentheses, being all defined at least on $A$, are measurable on restriction to $A$ (the $\sigma$-field(s) on the codomain(s) being understood from context); $\sigma(\ldots):=\sigma_\Omega(\ldots)$.  For a measure $\mu$ on a $\sigma$-field $\HH$, $H\in \HH$ and  $F\in \mathcal{H}/\mathcal{B}_{[-\infty,\infty]}$, $\mu(F):=\int Fd\mu$, $\mu(F;H):=\int_H Fd\mu$, and $\mu(F\vert H)=\mu(F;H)/\mu(H)$, whenever well-defined; the parentheses in $\mu(F)$ are omitted if no ambiguity can arise. If $\HH$ is a $\sigma$-field on $X$ and $A\subset X$, $\HH\vert_A:=\{HA:H\in \HH\}$ is the trace $\sigma$-field. 

Given  a probability space $(\Theta,\HH,\QQ)$: (i) for a sub-$\sigma$-field $\mathcal{A}$ of $\HH$ and $F\in \mathcal{H}/\mathcal{B}_{[-\infty,\infty]}$ with $\QQ F^+\land \QQ F^-<\infty$, we shall denote the conditional expectation  of $F$ with respect to $\mathcal{A}$ under $\QQ$ by $\QQ(F\vert\mathcal{A})$; (ii) for a random element $X$, indiscriminately, $X_\star \QQ=\QQ_X=\QQ\circ X^{-1}$ will signify the law of $X$ under $\QQ$; (iii) for measurable spaces $(E,\mathcal{E})$ and $(J,\mathcal{J})$, random elements $X\in \mathcal{H}/\mathcal{E}$ and $Y\in \mathcal{H}/\mathcal{J}$, and a probability kernel $\mathcal{Q}=(\QQ^x)_{x\in E}$ from $(E,\mathcal{E})$ into $(J,\mathcal{J})$ (meaning: $\QQ^x$ is a probability measure on $(J,\mathcal{J})$ for each $x\in E$; the map $E\ni x\mapsto \QQ^x(G)$ belongs to $\mathcal{E}/\mathcal{B}_{[0,1]}$ for each $G\in \mathcal{J}$), we shall say that $Y$ has kernel law $\mathcal{Q}$ conditionally on $X$ under $\QQ$ if for all $G\in \mathcal{J}/\mathcal{B}_{[0,\infty]}$ and $H\in \mathcal{E}/\mathcal{B}_{[0,\infty]}$, $\QQ(G(Y) H(X))=\QQ(\QQ^X(G)H(X))$. 

Finally, for $R\in \mathcal{G}/\mathcal{B}_{[0,\infty]}$ (i.e. for a random time $R$), $\llbracket R\rrbracket$ will denote the graph of $R$; when further $A\subset \Omega$, $R_A$ will be the time equal to $R$ on $A$ and $\infty$ otherwise. More generally, in Section~\ref{section:cts} that deals with continuous time, the stochastic intervals will carry their usual meaning from the general theory of stochastic processes \cite{general-theory}, as subsets of $\Omega\times  [0,\infty)$; for instance, for a random time $R$, $\llbracket R,\infty\rrparenthesis=\{(\omega,t)\in \Omega\times [0,\infty):R(\omega)\leq t\}$, while $\llbracket R\rrbracket=\{(\omega,R(\omega)):\omega\in \Omega\}\cap (\Omega\times [0,\infty))$ (we use ``double'' parentheses, e.g. $\llbracket$ in place of $[$ etc., to emphasize that we are considering the stochastic intervals as subsets of $\Omega\times [0,\infty)$).

\section{Discrete time}\label{section:discrete}

In this section, let $(\Omega,\GG, \PP)$ be a probability space, let $\FF=(\FF_n)_{n\in \mathbb{N}_0}$ be a filtration on $\Omega$ satisfying $\FF_\infty\subset \GG$, and let $Y=(Y_n)_{n\in \mathbb{N}}$ be a sequence of random elements on  $(\Omega,\GG, \PP)$ taking values in a measurable space $(E,\EE)$, with the property that,  for all $i\in \mathbb{N}_0$, $Y_{i+1}$ is independent of $\FF_i$, is $\FF_{i+1}/\EE$-measurable and is equally distributed as $Y_1$. That is to say (with the \emph{caveat} that the filtration is defined on the index set $\mathbb{N}_0$, whilst $Y$ has the index set $\mathbb{N}$): $Y$ is an adapted process with stationary independent values relative to $\FF$. We fix an $R\in \mathcal{G}/2^{\mathbb{N}_0\cup \{\infty\}}$ (in other words, a random time $R$) with $\PP(R<\infty)>0$. Recall from Subsection~\ref{subsection:motivation..} that, on $\{R<\infty\}$, $\Box_RY=(Y_{R+r})_{r\in \mathbb{N}}$. We further set  (i) $\FF_R':=\sigma_{\{R<\infty\}}(Z_R:Z\text{ an $\FF$-adapted $(\mathbb{R},\mathcal{B}_\mathbb{R})$-valued process})$; and (ii) $\PP':=(\GG\vert_{\{R<\infty\}}\ni A\mapsto \PP(A\vert R<\infty))$. Note that when $R$ is an $\FF$-stopping time, then $\FF_R'=\FF_R\vert_{\{R<\infty\}}$ with $\FF_R=\{A\in \GG: A \{R\leq n\}\in \FF_n\text{ for all }n\in \mathbb{N}_0\}$ (and equally $\FF_\infty$ may replace $\GG$ in the latter). Also, $\FF_R'/\mathcal{B}_{[-\infty,\infty]}=\{Z_R:Z\text{ an }\mathcal{F}\text{-adapted }([-\infty,\infty],\mathcal{B}_{[-\infty,\infty]})\text{-valued process}\}$. $\LL:=\PP_Y$ will denote the law of $Y$ on the product space $(E^{\mathbb{N}},\EE^{\otimes \mathbb{N}})$; of course $\LL=\times_\mathbb{N}\PP_{Y_1}$ is the product law. 

\begin{remark}%Expanded post submission.
The class of adapted processes being stable under (deterministic) stopping, we see that $\FF_R'=\sigma_{\{R<\infty\}}(Z^R:Z\text{ an $\FF$-adapted $(\mathbb{R},\mathcal{B}_\mathbb{R})$-valued process})$, i.e. $\FF'_R$ is also the $\sigma$-field generated on $\{R<\infty\}$ by the $\FF$-adapted $(\mathbb{R},\mathcal{B}_\mathbb{R})$-valued processes stopped at $R$ (with the stopped processes viewed as mapping into $(\mathbb{R}^{\mathbb{N}_0},(\mathcal{B}_\mathbb{R})^{\otimes \mathbb{N}_0})$). Indeed  $\FF_R'=\{A\in 2^{\{R<\infty\}}:A \{R=n\}=F \{R=n\}\text{ for some }F\in \FF_n\text{ for each }n\in \mathbb{N}_0\}$, and in the definition of $\FF'_R$ there is nothing special about $(\mathbb{R},\mathcal{B}_\mathbb{R})$: any measurable space $(A, \mathcal{A})$ for which there are $x\ne y$ from $A$ that are separated by $\mathcal{A}$, can replace it therein.
\end{remark}

The following technical lemma will be useful. Its proof is elementary -- we omit making it explicit.
\begin{lemma}\label{lemma}
Let  $(\Theta,\HH,\QQ)$ be a probability space. If $g\in \HH$, $\QQ(g)>0$, $\AA$ and $\BB$ are two $\QQ$-independent sub-$\sigma$-fields of $\HH$, and one can write $\QQ$-a.s. $\mathbbm{1}(g)=AB$ for $A\in \AA/\mathcal{B}_{[0,\infty]}$ and $B\in  \BB/\mathcal{B}_{[0,\infty]}$, then there exist $\QQ$-a.s. unique $a\in \AA$ and $b\in \BB$ such that $\QQ$-a.s. $g=ab$. \qed
\end{lemma}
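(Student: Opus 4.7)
The plan is to set $a := \{A > 0\} \in \AA$ and $b := \{B > 0\} \in \BB$, and to verify that $g = ab$ $\QQ$-a.s. First I would observe that, since $AB = \mathbbm{1}(g)$ $\QQ$-a.s., $AB$ takes only the values $0$ and $1$ a.s., in particular $AB \leq 1$ a.s. The inclusion $g \subset ab$ a.s. then follows immediately: on $g$, $AB = 1$, forcing $A > 0$ and $B > 0$. For the reverse, I would note that on $ab$ we have $AB > 0$; combined with $AB \in \{0,1\}$ a.s., this forces $AB = 1$ a.s. on $ab$, so $g$ holds there.

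For uniqueness, suppose $a,a' \in \AA$ and $b,b' \in \BB$ both realise $g = ab = a'b'$ a.s. My plan is to take $\QQ(\cdot \mid \BB)$ of $\mathbbm{1}(g) = \mathbbm{1}(a)\mathbbm{1}(b)$ and invoke the $\QQ$-independence of $\AA$ and $\BB$ to obtain $\QQ(g \mid \BB) = \QQ(a)\mathbbm{1}(b)$ a.s., and analogously $\QQ(g \mid \BB) = \QQ(a')\mathbbm{1}(b')$ a.s. Integrating gives $\QQ(a)\QQ(b) = \QQ(g) = \QQ(a')\QQ(b')$, with all four probabilities strictly positive since $\QQ(g) > 0$. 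Comparing the two expressions for $\QQ(g\mid\BB)$, each of which takes only two constant values, then forces simultaneously $\QQ(a) = \QQ(a')$ and $b = b'$ a.s.; symmetrically $a = a'$ a.s.

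I do not anticipate any genuine obstacle here — the argument is a book-keeping exercise in conditioning and independence, consistent with the author's description of the lemma as elementary. The only minor subtlety to check is that $A$ and $B$ are allowed to take the value $+\infty$, but the bound $AB = \mathbbm{1}(g) \leq 1$ a.s. makes $\{A=\infty\}\{B>0\}$ and $\{B=\infty\}\{A>0\}$ both null, so the potential infinities do not interfere with the dichotomy ``$AB=0$ or $AB=1$'' exploited above.
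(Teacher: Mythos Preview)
Your proof is correct. The paper explicitly omits the proof, calling it elementary, so there is no approach to compare against; your argument---reading off $a=\{A>0\}$, $b=\{B>0\}$ for existence, and conditioning on $\BB$ (respectively $\AA$) for uniqueness---is precisely the kind of elementary verification the author had in mind.
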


\begin{theorem}\label{theorem}
Of the following two statements, \ref{b} implies \ref{a}. If in addition $\{R=n\}\in \overline{\FF_n\lor \sigma(\Box_nY)}$ for each $n\in \mathbb{N}_0$, then \ref{a} implies \ref{b}.

\begin{enumerate}[(a)]
\item\label{a} $\FF_R'$ is independent of $\Box_RY$ under $\PP'$.  
\item\label{b} There are a $G\in  \EE^{\otimes \mathbb{N}}$ and for all $n\in \mathbb{N}_0$ with $\PP(R=n)>0$ an $F_n\in \FF_n$, satisfying a.s. $$\{R=n\}=F_n \{\Box_nY\in G\}.$$
\end{enumerate}
When \ref{b} prevails, $G$ is $\LL$-a.s. unique, the $F_n$ are a.s. unique, and  $(\Box_RY)_\star\PP'=\PP(Y\in \cdot \vert Y\in G)=\LL(\cdot\vert G)$. 

Furthermore,  of the following two statements, \ref{d} implies \ref{c}. If in addition $\{R=n\}\in\overline{\FF_n\lor \sigma(\Box_nY)}$ for each $n\in \mathbb{N}_0$, 
then \ref{c} implies \ref{d}.

\begin{enumerate}[(i)]
\item\label{c} $\FF_R'$ is independent of $\Box_RY$ under $\PP'$ and the law of  $\Box_RY$ under $\PP'$ is $\LL$.
\item\label{d} $R$ is a stopping time relative to the completion of $\FF$. 
\end{enumerate}
\end{theorem}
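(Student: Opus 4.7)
\emph{Plan.} Both directions will be deduced from the already-established equivalence of \ref{a} and \ref{b} in the first part of the theorem, simply by reading off what the additional law constraint in \ref{c} imposes on the data $(G,(F_n)_n)$ furnished by \ref{b}.

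For \ref{d} $\Rightarrow$ \ref{c} the plan is to verify \ref{b} directly with $G:=E^{\mathbb{N}}$. Indeed, if $R$ is a stopping time of $\overline{\FF}$, then for each $n\in \mathbb{N}_0$ with $\PP(R=n)>0$ one can pick $F_n\in \FF_n$ agreeing a.s. with $\{R=n\}$, and trivially then $\{R=n\}=F_n\{\Box_nY\in E^{\mathbb{N}}\}$ a.s. The first part of the theorem (the direction \ref{b} $\Rightarrow$ \ref{a}, for which no measurability hypothesis is required) now supplies at one stroke both the independence in \ref{a} and the identification $(\Box_RY)_\star\PP'=\LL(\cdot\vert E^{\mathbb{N}})=\LL$, i.e.\ \ref{c}.

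For the converse \ref{c} $\Rightarrow$ \ref{d} under the hypothesis $\{R=n\}\in \overline{\FF_n\lor \sigma(\Box_nY)}$ for each $n\in \mathbb{N}_0$, the plan is to run the equivalence in reverse. From \ref{c} and the hypothesis, \ref{a} $\Rightarrow$ \ref{b} produces $G\in \EE^{\otimes \mathbb{N}}$ and sets $F_n\in \FF_n$ with $\{R=n\}=F_n\{\Box_nY\in G\}$ a.s. (for $n$ with $\PP(R=n)>0$) together with the identification $(\Box_RY)_\star\PP'=\LL(\cdot\vert G)$. Comparing the latter with the law clause of \ref{c} gives $\LL=\LL(\cdot\vert G)$, and evaluating this equality at $G^c$ forces $\LL(G)=1$. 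Since stationary independence of the values of $Y$ ensures $\Box_nY$ has law $\LL$ under $\PP$ for every $n$, the event $\{\Box_nY\in G\}$ is $\PP$-a.s., hence $\{R=n\}=F_n$ a.s., i.e.\ $\{R=n\}\in \overline{\FF_n}$. For $n$ with $\PP(R=n)=0$ the same inclusion holds automatically because the event is negligible. This yields the stopping-time property in \ref{d}.

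\emph{Expected obstacle.} There is no serious one: the whole statement is essentially a corollary of the first part of the theorem. The only ingredient genuinely new here is the elementary observation that $\LL(\cdot\vert G)=\LL$ forces $\LL(G)=1$, which is what collapses the conditioning set $G$ produced by \ref{b} down to a full-measure event and thereby turns the characterization \ref{b} into mere $\overline{\FF_n}$-measurability of $\{R=n\}$.
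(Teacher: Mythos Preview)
Your proposal is correct and follows essentially the same approach as the paper. Both treat the \ref{c}/\ref{d} equivalence as an immediate corollary of the \ref{a}/\ref{b} equivalence together with the law identification $(\Box_RY)_\star\PP'=\LL(\cdot\vert G)$: for \ref{d}~$\Rightarrow$~\ref{c} one takes $G=E^{\mathbb{N}}$, and for \ref{c}~$\Rightarrow$~\ref{d} the hypothesis $\LL=\LL(\cdot\vert G)$ forces $\LL(G)=1$ (the paper evaluates at $G$, you at $G^c$; the content is identical), whence $\{R=n\}=F_n$ a.s.
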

The proof of Theorem~\ref{theorem} follows on p.~\pageref{proof-of-theorem} after we have given some remarks and an example. 
\begin{remark}\label{remark:discrete:RW}
\leavevmode
%\noindent (1) \ref{b} is saying that the event $\{R=n\}$ is the a.s. intersection of an event that depends on the past up to $n$, and of an event that depends on the the future after $n$, with the latter dependence being uniform over $n$. 
\begin{enumerate}
\item For sure $\{R=n\}\in \overline{\FF_n\lor \sigma(\Box_nY)}$ for each $n\in \mathbb{N}_0$, if $R\in \overline{\sigma(Y)}/\mathcal{B}_{[0,\infty]}$. 
\item Despite \ref{a} concerning only the property of independence, in order for a result of the sort of \ref{b}  to hold true, the assumption that $Y$ have, in addition to independent, \emph{stationary} values, is essential. For instance, if the process $Z=(Z_i)_{i\in \mathbb{N}}$ has independent values, with $Z_1$ and $Z_2$ taking on the values $-1$ and $1$ equiprobably, while $Y_k=1$ for $k\in \mathbb{N}_{\geq 3}$, then the random time $S:=\mathbbm{1}(Z_1=1)+2\mathbbm{1}(Z_1=-1)$ is even a stopping time of the natural filtration of $Z$, but the history of $Z$ up to $S$ is not independent of $(Z_{S+i})_{i\in \mathbb{N}}$, e.g. $\PP(S=2,Z_{S+1}=1)=\PP(S=2)\ne \PP(S=2)\PP(Z_{S+1}=1)$. 
\item\label{remark:RW} Given the discussion in Subsection~\ref{subsection:motivation..}, Theorem~\ref{theorem} has an obvious corollary for random walks. In particular, for an Euclidean space-valued random walk $X$ on $(\Omega,\GG,\PP)$, adapted and having independent increments relative to $\FF$, vanishing a.s. at zero, and for which $R\in \overline{\sigma(X)}/\mathcal{B}_{[0,\infty]}$, the condition for the independence of $\Delta_RX$ and $\FF_R'$ under $\PP'$ writes as 
\begin{equation}\label{eq:RW-independence}
\{R=n\}=F_n \{\Delta_nX\in \Gamma\}\text{ a.s. for all }n\in \mathbb{N}_0
\end{equation} for a $\Gamma\in (\mathcal{B}_{\mathbb{R}^d})^{\otimes {\mathbb{N}_0}}$ and some $F_n\in \mathcal{F}_n$, $n\in \mathbb{N}_0$. 

\item For the necessity of the conditions \ref{b} \& \ref{d}, the assumption that $\{R=n\}\in \overline{\FF_n\lor \sigma(\Box_nY)}$ for each $n\in \mathbb{N}_0$ does not in general follow from the rest of the assumptions: for instance when $\FF$ is the (completed) natural filtration of $Y$ and $R$ is non-trivial and independent of $Y$. 
\end{enumerate}
%\begin{example}
%An obvious instance of a random time $R$ that meets the provisions of \ref{b} is the last time that $Y$ is in $F$ for a given $F\in \EE$, viz. the case when $R=\sup\{n\in \mathbb{N}:Y_n\in F\}$ ($\sup\emptyset:=0$), assuming of course $\PP(R<\infty)>0$. Indeed, for all $n\in\mathbb{N}$, $\{R=n\}=\{Y_n\in F\}\{\Box_n Y\text{ never in }F\}$, while $\{R=0\}=\{Y\text{ never in }F\}$. Furthermore, the law of $\Box_R Y$ is then $\LL$ conditioned on $\times_\mathbb{N}(E\backslash F)$.
%\end{example}
\begin{example}\label{example:WH}%Added post submission!!! 
It is interesting to see how the above produces the independence statement of the Wiener-Hopf factorization  for random walks. Let indeed $X$ be a real-valued random walk on $(\Omega,\GG,\PP)$, adapted to $\FF$, vanishing a.s. at zero. Assume furthermore $Z=(Z_i)_{i\in \mathbb{N}}$ is a sequence of i.i.d. random elements taking values in $\{0,1\}$, with $\PP(Z_0=1)=:q\in [0,1]$, adapted to $\FF$, independent of $X$. Note that $\Gamma:=\inf \{k\in \mathbb{N}:Z_k=1\}$ is an $\FF$-stopping time, independent of $X$, and that it has the geometric distribution on $\mathbb{N}\cup\{\infty\}$ with success parameter $q\in [0,1]$: $\PP(\Gamma=k)=q(1-q)^{k-1}$ for all $k\in \mathbb{N}$. Assume finally $Y_ {n+1}=(Y_{n+1}^1,Y_{n+1}^2)=(X_{n+1}-X_{n},Z_{n+1})$ is independent of $\FF_n$ for all $n\in \mathbb{N}_0$. %We may then take $Y_n=(Y^1_n,Y^2_n)=(X_{n}-X_{n-1},Z_n)$ for $n\in \mathbb{N}$. 
Set next $\Gamma':=\Gamma-1$ and let $R=\sup\{k\in \{0,\ldots,\Gamma'\}\cap \mathbb{N}_0:X_k=\overline{X}_k\}$ be the last time strictly before time $\Gamma$ that $X$ is at its running supremum $\overline{X}$. We assume of course $\PP(R<\infty)>0$ (in this case it is equivalent to $\PP(R<\infty)=1$; and is automatic when $q>0$, whilst, when $q=0$, it obtains if and only if $X$ drifts to $-\infty$). Then we see that for all $n\in \mathbb{N}_0$, $\{R=n\}=\{\overline{X}_n=X_n\}\{n\leq \Gamma'\}\left\{\sum_{k=1}^l (\Box_n Y)_k^1<0\text{ for all }l\in 
\{1,\ldots,\inf\{k\in \mathbb{N}:(\Box_n Y)_k^2=1\}-1\}\cap 
\mathbb{N} \right\}$. Hence Theorem~\ref{theorem} applies and tells us that, under $\PP'$, $\FF_R'$ (in particular the stopped process $X^R$) is independent of $\Box_RY$ (in particular of $\Delta_RX$). Thus, when $q>0$ and hence $R\leq \Gamma'<\infty$ a.s., the pair $(R,\overline{X}_{\Gamma' })$ is seen to be independent of the pair $(\Gamma'-R,\overline{X}_{\Gamma'}-X_{\Gamma'})$, which is the independence statement of the Wiener-Hopf factorization for random walks. Note that Theorem~\ref{theorem} also gives the law of the post-$R$ increments of $X$: on $\{R<\infty\}$, $\Delta_R X$ behaves as $X$ conditioned not to \emph{return} to zero up to strictly before an independent geometric random time on $\mathbb{N}\cup \{\infty\}$ with success parameter $q$. (Of course, in the above, we could take, mutatis mutandis, $R=\inf\{k\in \{0,\ldots,\Gamma'\}\cap \mathbb{N}_0:X_k=\overline{X}_{\Gamma'}\}$ and essentially the same results would follow. We leave it to the interested reader to make the eventual differences explicit.)
\end{example}
\end{remark}
\begin{proof}[Proof of Theorem~\ref{theorem}]\label{proof-of-theorem}
By a monotone class argument, $\Box_nY$ is independent of $\FF_n$ for every $n\in \mathbb{N}_0$ (and has clearly the same law as $Y$). Let $M:=\{n\in \mathbb{N}_0:\PP(R=n)>0\}$. 

Assume \ref{b}. We see that for any $H\in  \EE^{\otimes \mathbb{N}}/\mathcal{B}_{[0,\infty]}$, any $\FF$-adapted process $Z$ with values in $([0,\infty],\mathcal{B}_{[0,\infty]})$, and then each $n\in M$, 
 $$\PP(Z_RH(\Box_RY);R=n)=\PP(Z_nH(\Box_nY);R=n)=\PP(Z_nH(\Box_nY);F_n\{\Box_nY\in G\})$$
$$=\PP(Z_n;F_n)\PP(H(\Box_nY);\Box_nY\in G)=\PP(Z_n ;F_n)\PP(H(Y); Y\in G).$$
 Summing over $n\in M$ implies $\PP(Z_RH(\Box_RY);R<\infty)=\PP\left( \sum_{n\in M}Z_n \mathbbm{1}_{F_n}\right)\PP(H(Y);Y\in G)$. From this we obtain, taking $Z\equiv 1$:   $\PP(H(\Box_RY);R<\infty)=\PP(\sum_{n\in M}\mathbbm{1}_{F_n})\PP(H(Y);Y\in G)$; taking $H\equiv 1$:  $\PP(Z_R;R<\infty)=\PP(\sum_{n\in M} Z_n\mathbbm{1}_{F_n})\PP (Y\in G)$; taking $Z\equiv 1\equiv H$: $\PP(R<\infty)=\PP(\sum_{n\in M}\mathbbm{1}_{F_n})\PP (Y\in G)$. Then  $\PP(H(\Box_RY);R<\infty)\PP(Z_R;R<\infty)=%\PP\left(\sum_{n\in M} Z_n\mathbbm{1}(F_n)\right)\PP(H(Y);Y\in G)\PP(\cup_{n\in M} F_n)\PP (Y\in G)=
\PP(Z_RH(\Box_RY);R<\infty)\PP(R<\infty)$,  whence the desired independence in \ref{a} follows. We also obtain $\PP(H(\Box_RY)\vert R< \infty)=\PP(H(Y)\vert Y\in G)$ for all $H\in\EE^{\otimes \mathbb{N}}/\mathcal{B}_{[0,\infty]}$.  If moreover $R$ is a stopping time relative to the completion of $\FF$, i.e. $G=E^{\mathbb{N}}$ $\LL$-a.s., then this entails $(\Box_RY)_\star\PP'=\LL$.

Now assume \ref{a} and that $\{R=n\}\in \overline{\FF_n\lor \sigma(\Box_nY)}$ for each $n\in \mathbb{N}_0$. 
We first  show that 
\begin{enumerate}[(I)]
\item\label{i} For each $n\in M$, there exist $A_n\in \FF_n/\mathcal{B}_{[0,\infty]}$ and $B_n\in \sigma(\Box_n(Y))/\mathcal{B}_{[0,\infty]}$, such that a.s. $\mathbbm{1}(R=n)=A_nB_n$.  
\suspend{enumerate}

Suppose \emph{per absurdum} that  \ref{i} fails for some $n\in M$. Then it must be the case that with a positive probability, the equality $\PP(R=n)\mathbbm{1}(R=n)=\PP(R=n\vert \FF_n)\PP(R=n\vert \Box_nY)$ fails. By the independence of $\FF_n$ and $\Box_nY$, the assumption $\{R=n\}\in \overline{\FF_n\lor \sigma(\Box_nY)}$, and a monotone class argument, this implies that there are $A\in \FF_n$ and $B\in \sigma(\Box_nY)$ such that $$\PP(R=n)\PP(R=n,A,B) \ne \PP(R=n,A)\PP(R=n,B).$$ The assumption of \ref{a} implies that $\PP(R<\infty)\PP(\tilde{A},\tilde{B},R=n)=\PP(\tilde{A},R=n)\PP(\tilde{B},R<\infty)$ for any $\tilde{A}\in \FF_n$ and $\tilde{B}\in \sigma_{\{R<\infty\}}(\Box_RY)$ (for, $\mathbbm{1}_{\llbracket n\rrbracket}\mathbbm{1}_{\tilde{A}}$ is $\FF$-adapted, so $\tilde{A}\{R=n\}\in \FF_R'$). Taking $\tilde{A}=\Omega$ allows to conclude that
\begin{equation}\label{eq:useful}
\PP(R<\infty)\PP(\tilde{B},R=n)=\PP(\tilde{B},R<\infty)\PP(R=n),
\end{equation}
hence $$\PP(R=n)\PP(\tilde{A},\tilde{B},R=n)=\PP(\tilde{A},R=n)\PP(\tilde{B},R=n).$$ Now, $\sigma(\Box_nY)\vert_{\{R=n\}}=\sigma_{\{R<\infty\}}(\Box_RY)\vert_{\{R=n\}}$. Therefore $B\{R=n\}=\tilde{B}\{R=n\}$ for some $\tilde{B}\in \sigma_{\{R<\infty\}}(\Box_RY)$. Taking finally $\tilde{A}=A$ yields a contradiction.

By Lemma~\ref{lemma} and \ref{i}, for each $n\in M$, there are an a.s. uniquely determined $F_n\in \FF_n$ and an $\LL$-a.s. uniquely determined $G_n\in \EE^{\otimes \mathbb{N}}$, such that a.s. $\{R=n\}=F_n \{\Box_nY\in G_n\}$. We now show that 

\resume{enumerate}[]
\item[(II)]  $G_n=G_m$ $\LL$-a.s. whenever $\{n,m\}\subset M$. 
\end{enumerate}

Let $\{m,n\}\subset M$. We compute, for any $H\in \EE^{\otimes \mathbb{N}}/\mathcal{B}_{[0,\infty]}$ (using \eqref{eq:useful} in the last equality):
\footnotesize
$$\PP(R=m)\PP(H(Y);Y\in G_m)=\PP(R=m)\PP(H(\Box_mY);\Box_mY\in G_m)=\PP (F_m)\PP(\Box_mY\in G_m)\PP(H(\Box_mY);\Box_mY\in G_m)$$ $$=\PP (H(\Box_mY);F_m \{\Box_mY\in G_m\})\PP(Y\in G_m)=\PP(H(\Box_mY);R=m)\PP(Y\in G_m)$$ $$=\PP(H(\Box_R Y);R=m)\PP(Y\in G_m)=\PP(Y\in G_m)\frac{\PP(R=m)\PP(H(\Box_R Y);R<\infty)}{\PP(R<\infty)},$$ \normalsize i.e. $\PP(R<\infty)\PP(H(Y);Y\in G_m)=\PP(Y\in G_m)\PP(H(\Box_R Y);R<\infty)$. Then $\PP(H(Y)\vert Y\in G_m)=\PP(H(\Box_RY)\vert R<\infty)=\PP(H(Y)\vert Y\in G_n)$ for all  $H\in \EE^{\otimes \mathbb{N}}/\mathcal{B}_{[0,\infty]}$, which implies that $\LL(G_n\triangle G_m)=1$ [take $H=\mathbbm{1}_{G_n}$ and $H=\mathbbm{1}_{G_m}$]. 

If we are moreover given that $(\Box_RY)_\star\PP'=\LL$, then with $G$ as in \ref{b}, by what we have already proven,  $\LL(G)=\PP(\Box_RY\in G\vert R<\infty)=\PP(
Y\in G\vert Y \in G)=1$. 
\end{proof}
Recall the notation $\Delta_RX$ and $\theta_RX$ from Subsection~\ref{subsection:motivation..}.
\begin{corollary}\label{corollary:characterization-of-ST-discrete}
Let $d\in \mathbb{N}$ and let $X$ be an $(\mathbb{R}^d,\mathcal{B}_{\mathbb{R}^d})$-valued random walk on $(\Omega,\GG,\PP)$, adapted and having independent increments relative to the filtration $\FF$, vanishing a.s. at zero. For $x\in \mathbb{R}^d$, let $\PP^x$ be the law of $x+X$ under $\PP$; $\mathcal{P}:=(\PP^x)_{x\in \mathbb{R}^d}$. Of the following statements, \ref{RWd} implies \ref{RWc}, \ref{RWc} and \ref{RWc'} are equivalent. If in addition $\{R=n\}\in\overline{\FF_n\lor \sigma(\Delta_nX)}$ for each $n\in \mathbb{N}_0$, 
then \ref{RWc} implies \ref{RWd}.

\begin{enumerate}[(i)]
\item\label{RWc'} Under $\PP'$, conditionally on $X_R$, $\theta_R X$ is independent of $\FF_R'$ and has kernel law $\mathcal{P}$. 
\item\label{RWc} $\FF_R'$ is independent of $\Delta_RX$ under $\PP'$ and the law of  $\Delta_RX$ under $\PP'$ is $\PP^0$.
\item\label{RWd} $R$ is a stopping time relative to the completion of $\FF$. 
\end{enumerate}
\end{corollary}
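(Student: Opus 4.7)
The plan is to reduce the corollary to Theorem~\ref{theorem} applied to the sequence of increments $Y := \Delta X = (X_n-X_{n-1})_{n\in \mathbb{N}}$, which by hypothesis has stationary independent values relative to $\FF$. The identities $(\Delta_R X)_n = \sum_{i=1}^n (\Box_R Y)_i$ and $(\Box_R Y)_n = (\Delta_R X)_n - (\Delta_R X)_{n-1}$ give $\sigma(\Delta_R X) = \sigma(\Box_R Y)$, and analogously $\sigma(\Delta_n X) = \sigma(\Box_n Y)$ for each $n\in \mathbb{N}_0$. Under the associated measurable bijection between paths and increment-sequences, which sends $\LL := \PP_{\Delta X}$ to $\PP^0$ (the law of $X$), the statement that $\Delta_R X$ has law $\PP^0$ under $\PP'$ is equivalent to the statement that $\Box_R Y$ has law $\LL$ under $\PP'$.

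With this identification, condition \ref{RWc} is precisely condition \ref{c} of Theorem~\ref{theorem} for $Y$, condition \ref{RWd} is condition \ref{d} verbatim, and the extra hypothesis $\{R=n\}\in \overline{\FF_n\lor \sigma(\Delta_n X)}$ matches the extra hypothesis of the theorem's ``furthermore'' clause. That clause therefore delivers at once the implication \ref{RWd}${}\Rightarrow{}$\ref{RWc} in full generality, together with \ref{RWc}${}\Rightarrow{}$\ref{RWd} under the stated additional hypothesis.

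It remains to prove \ref{RWc}${}\Leftrightarrow{}$\ref{RWc'}. Since $X$ is $\FF$-adapted, $X_R$ belongs to $\FF_R'/\mathcal{B}_{\mathbb{R}^d}$ and pathwise $(\theta_R X)_t = X_R + (\Delta_R X)_t$. For \ref{RWc}${}\Rightarrow{}$\ref{RWc'} I would compute, for a bounded measurable $\phi$ on the path space,
\[
\PP'\bigl(\phi(\theta_R X)\bigm|\FF_R'\bigr) \;=\; \int \phi(X_R+\omega)\,\PP^0(d\omega) \;=\; \PP^{X_R}(\phi),
\]
using the $\FF_R'$-measurability of $X_R$ and the assumed independence of $\Delta_R X$ from $\FF_R'$ under $\PP'$ with law $\PP^0$. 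Since this version of the conditional expectation is $\sigma(X_R)$-measurable, the conditional independence of $\theta_R X$ and $\FF_R'$ given $X_R$ follows, and the kernel law of $\theta_R X$ given $X_R$ is precisely $\mathcal{P}$. For the converse, writing $\Delta_R X = \theta_R X - X_R$, its conditional law given $X_R$ under $\PP'$ is the pushforward of $\PP^{X_R}$ by $\omega \mapsto \omega - X_R$, which equals $\PP^0$ and is therefore \emph{free of $X_R$}; hence $\Delta_R X$ is in fact independent of $X_R$ with law $\PP^0$, and combining this with the hypothesised conditional independence of $\theta_R X$ (a fortiori of $\Delta_R X$) from $\FF_R'$ given $X_R$ yields unconditional independence of $\Delta_R X$ from $\FF_R'$ by a short tower-property calculation.

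The main (though still routine) subtlety will be precisely this last upgrade in \ref{RWc'}${}\Rightarrow{}$\ref{RWc}: passing from Markov-style independence \emph{given $X_R$} to full, unconditional independence from $\FF_R'$ turns on the crucial observation that the kernel law $\mathcal{P}$ forces the conditional law of $\Delta_R X$ given $X_R$ to be a fixed probability measure not depending on $X_R$, so that $\Delta_R X$ is actually independent of $X_R$. Everything else is a direct translation via the increments-to-partial-sums bijection and a citation of Theorem~\ref{theorem}.
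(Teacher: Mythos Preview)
Your proposal is correct and follows exactly the same approach as the paper: reduce \ref{RWc}$\Leftrightarrow$\ref{RWd} to Theorem~\ref{theorem} via the increment sequence $Y=\Delta X$, and handle \ref{RWc}$\Leftrightarrow$\ref{RWc'} by routine conditional-independence manipulation using $X_R\in\FF_R'/\mathcal{B}_{\mathbb{R}^d}$. The paper's proof is two sentences to this effect; you have simply unpacked the details (in particular the ``upgrade'' from conditional to unconditional independence in \ref{RWc'}$\Rightarrow$\ref{RWc}), and your outline of that step is sound.
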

\begin{proof}
The equivalence of \ref{RWc'}  and \ref{RWc} is by standard manipulation of conditional independence, using the fact that $X_R\in \FF'_R/\mathcal{B}_{\mathbb{R}^d}$.  The rest follows by Theorem~\ref{theorem} applied to the sequence $\Delta X$ of the consecutive increments of $X$ (viz. the reduction of $X$ to $\Delta X$ of Subsection~\ref{subsection:motivation..}). 
\end{proof}

\begin{remark}\label{remark:cf}
Assume the random walk $X$ of Remark~\ref{remark:discrete:RW}\eqref{remark:RW} takes values in a denumerable set $J$ and is the coordinate process on the canonical filtered space $(J^{\mathbb{N}_0},\sigma(X),\FF^X)$ of $J$-valued sequences (with $\FF^X$ the natural filtration of $X$). Compare \eqref{eq:RW-independence} with the condition of  \cite[Lemma~3.12]{jacobsen} for $\FF_R'$ to be independent of $\theta_RX$ conditionally on $X_R$ under $\PP'$ (i.e. with the condition for $R$ to be a  `conditional independence time' in the terminology of \cite{jacobsen}), namely that there should be a $G\in  \EE^{\otimes \mathbb{N}_0}$ and $F_n\in \FF_n$ for $n\in \mathbb{N}_0$, satisfying 
\begin{equation}\label{eq:jacobsen}
\{R=n\}=F_n \{\theta_nX \in G\}\text{ a.s.  for all }n\in \mathbb{N}_0.
\end{equation} \eqref{eq:RW-independence} implies the latter, but the converse fails in general, as the next example demonstrates. Note also that the proof method of  \cite{jacobsen} for establishing condition \eqref{eq:jacobsen} ---  working on atoms by exploiting the canonical setting --- is quite different from our method for establishing condition \eqref{eq:RW-independence}. 
\end{remark}

\begin{example}\label{example:not-equivalent}
Retain the provisions of Remark~\ref{remark:cf}, with further $J=\mathbb{Z}$, %$(\Omega,\GG,\FF)$ is the canonical filtered space  of $\mathbb{Z}$-valued sequences 
$X$ drifting to $-\infty$, and $\PP(X_1<-1)$, $\PP(X_1=-1)$, $\PP(X_1>0)$ all positive. Let $R$ be equal to (i) the last time $X$ is at its running supremum $\overline{X}$ and $X$ is not equal to $0$, on the event that there is such a time; (ii) the last time $X$ is at  $\overline{X}$ and $X$ is equal to $0$ and $X$ jumps down by $1$ on its next step, on the event that there is such a time; (iii) $\infty$ otherwise. Then for $n\in \mathbb{N}_0$, \footnotesize $$\{R=n\}=\{\overline{X}_n=X_n\} \left[\{X_n=0,X_{n+1}=-1,X_{n+m}<0\text{ for }m\in \mathbb{N}_{\geq 2}\}\cup \{X_n\ne 0,X_{n+m}<X_n\text{ for }m\in \mathbb{N}\}\right].$$\normalsize Notice that  $\PP(R<\infty)>0$ thanks to the assumptions on $X$. It is intuitively clear that $R$ is a conditional independence time, but not an independence time for $X$. Formally, it follows from \eqref{eq:jacobsen} that $\Delta_R X$ is independent of $\FF_R'$ given $X_R$ under $\PP'$, and from \eqref{eq:RW-independence}, that this fails if the proviso `given $X_R$' is dropped. Indeed, from the properties of conditional independence it is seen easily, that not only in this example is $\theta_R X$ independent of $\FF'_R$ given $X_R$ under $\PP'$, but that in the latter statement any $\sigma$-field containing $\sigma_{\{R<\infty\}}(X_R)$ and contained in $\FF'_R$ may replace $X_R$. % assuming the converse, for instance (i.) $\PP$-a.s. $\{R=0\}$ would need to equal $D\cap \Gamma$ for some $D\in \FF^X_0$ and some measurable $\Gamma$; (ii.) $\PP(R=0)>0$ and the $\PP$-triviality of $\FF^X_0$ would render $\PP$-a.s. $D=\mathbb{Z}^{\mathbb{N}_0}$ and then $\Gamma=\{R=0\}=\{X_1=-1,X_m<0\text{ for }m\in \mathbb{N}_{\geq 2}\}$; (iii.) also $\PP$-a.s. $\{R=1\}=[\{X_1=0\}\cap \{\Delta_1X\in \Gamma\}]\cup[\{X_1>0\}\cap \{X_{1+m}<X_1\text{ for }m\in \mathbb{N}\}]$ would need to equal $\{X_1\in A\}\cap \{\Delta_1X\in \Gamma\}$ for some $A\subset \mathbb{Z}$; (iv.) then via Lemma~\ref{lemma},  $\PP(X_1>0,X_{1+m}<X_1\text{ for }m\in \mathbb{N})>0$, and the $\PP$-independence of $X_1$ and $\Delta_1X\sim_\PP X$, this would force $\Gamma=\{X_{m}<0\text{ for }m\in \mathbb{N}\}$ $\PP$-a.s., which is  a contradiction (with a positive $\PP$-probability $X$ jumps down by more than $1$ never to return to zero). 
\end{example}

\section{Continuous time}\label{section:cts}

In this section, let $d\in \mathbb{N}$ and let $X=(X_t)_{t\in [0,\infty)}$ be a L\'evy process  on  $(\Omega,\GG, \PP)$, relative to a filtration $\FF=(\FF_t)_{t\in [0,\infty)}$ on $\Omega$ satisfying $\FF_\infty\subset \GG$, taking values in $(\mathbb{R}^d,\mathcal{B}_{\mathbb{R}^d})$: so $X$ is adapted and has stationary independent increments relative to $\FF$, vanishes at zero a.s., and has c\`adl\`ag paths. Denote by $\Opt$ (respectively, $\Prog$) the optional (respectively, progressive) $\sigma$-field, i.e. the $\sigma$-field on $\Omega\times [0,\infty)$ generated by the $\FF$-adapted c\`adl\`ag (respectively, $\FF$-progressively measurable) $(\mathbb{R},\mathcal{B}_\mathbb{R})$-valued processes. \label{cts:firstpara} Then $O\in \mathcal{O}/\mathcal{B}_\mathbb{R}$ is just another way of saying that $O$ is a real-valued optional process; to say that $O\in \mathcal{O}$ means that $O$ is an optional subset of $\Omega\times [0,\infty)$. We fix next an $R\in \GG/\mathcal{B}_{[0,\infty]}$ with $\PP(R<\infty)>0$. Recall from Subsection~\ref{subsection:motivation..} that $\llbracket R\rrbracket=\{(\omega,t)\in \Omega\times [0,\infty):t=R(\omega)\}$ is the graph of $R$ while, on $\{R<\infty\}$, $\Delta_RX=(X_{R+t}-X_R)_{t\in [0,\infty)}$ (this notation is to be retained for processes and random times other than the given $X$ and $R$) and $\theta_RX=(X_{R+t})_{t\in [0,\infty)}$. We further set  (i) $\FF_R':=\sigma_{\{R<\infty\}}(Z_R:Z\in \Opt/\mathcal{B}_{\mathbb{R}})$ (a notation to be retained for random times other than the given $R$) and $\FF_{R+}':=\sigma_{\{R<\infty\}}(Z_R:Z\in \Prog/\mathcal{B}_{\mathbb{R}})$; and (ii) $\PP':=(\GG\vert_{\{R<\infty\}}\ni A\mapsto \PP(A\vert R<\infty))$. When $R$ is an $\FF$-stopping time, then $\FF_{R+}'=\FF_R'=\FF_R\vert_{\{R<\infty\}}$ with $\FF_R=\{A\in \GG: A\{R\leq t\}\in \FF_t\text{ for all }t\in [0,\infty)\}$ (and equally $\FF_\infty$ may replace $\GG$ in the latter). Besides, $\FF_{R+}'/\mathcal{B}_{[-\infty,\infty]}=\{Z_R:Z\in\Prog/\mathcal{B}_{[-\infty,\infty]}\}$ and $\FF_{R}'/\mathcal{B}_{[-\infty,\infty]}=\{Z_R:Z\in \mathcal{O}/\mathcal{B}_{[-\infty,\infty]}\}$. 
\begin{remark}\label{remark:various}
When $R$ is not a stopping time, various (other) interpretations of the notion of the $\sigma$-field $\FF_R$ of the past up to $R$ appear in the literature, for instance:
\begin{enumerate}
\item $\FF_R$ might be taken to consist of those $A\in\FF_\infty$, for which, given any $t\in [0,\infty)$, there is an $A_t\in \FF_t$ with $A\{R<t\}=A_t\{R<t\}$. When $R$ is an honest time $\FF_R\vert_{\{R<\infty\}}=\FF_{R+}'$ \cite[Proposition~3.1]{meyer}.
\item One can let $\FF_R$ be generated by $\FF_S\vert_{\{S\leq R\}}$ as $S$ ranges over the $\FF$-stopping times. According to \cite{pittenger-birth} (that quotes \cite{indiana}) $\FF_R\vert_{\{R<\infty\}}=\FF_R'$.
\item One can take for $\FF_R$, at least when $\FF$ is the (completed) natural filtration of $X$, the (completed) initial structure $\sigma(X^R,R)$.  See \cite[Section~4]{atkinson} for some connections between the latter and the natural filtration of $X$. 
\end{enumerate} %Added post-submission.
Finally note that a  natural related problem to the study of what we have called independence times would be to investigate conditions under which, on $\{R<\infty\}$, $(X_{R+t}-X_{R-})_{t\in [0,\infty)}$ ($X_{0-}:=X_0$) is independent of $\sigma_{\{R<\infty\}}(Z_R:Z\in \mathcal{P}/\mathcal{B}_{\mathbb{R}})$, $\mathcal{P}$ being the predictable $\sigma$-field (cf. for instance the statement of the independence property of the Wiener-Hopf factorization for L\'evy processes in the case that $0$ is regular for itself for the drawdown process \cite[Lemma~VI.V(ii)]{bertoin}). We do not pursue this problem here. \label{added:predictable}
\end{remark}
We set $\LL:=X_\star\PP$, the law of $X$ on the space $(\mathbb{D},\mathcal{D})$, where $\mathbb{D}$ are c\`adl\`ag paths mapping $[0,\infty)\to \mathbb{R}^d$ and $\mathcal{D}$ is the $\sigma$-field generated by the canonical projections.

The following lemma describes how the property of being an independence time `separates' over optional sets and also over sets that `depend only on the incremental future'. 

\begin{lemma}\label{lemma:restriction}\leavevmode
\begin{enumerate}[(I)]
\item\label{restriction:I} Suppose $\FF_R'$ is independent of $\Delta_RX$ under $\PP'$, and that $S\in \mathcal{G}/\mathcal{B}_{[0,\infty)}$ satisfies $\PP(S<\infty)>0$ and $\llbracket S\rrbracket=A\llbracket R\rrbracket$ with $A\in \Opt\cup\sigma_{\Omega\times [0,\infty)}(\blacktriangle X)$.  Set $\LL':=(\Delta_RX)_\star\PP'$ and $\PP^{:}:=(\mathcal{G}\vert_{\{S<\infty\}}\ni F\mapsto \PP(F\vert S<\infty))$.  
\begin{enumerate}[(a)]
\item\label{restriction:a} If $A\in \Opt$, then $\Delta_SX$ is independent of $\FF_S'$ under $\PP^:$ and $(\Delta_SX)_\star \PP^:=\LL'$. 
\item\label{restriction:b} If $A=(\blacktriangle X)^{-1}(\Gamma)$ for a $\Gamma\in \mathcal{D}$, then again $\Delta_SX$ is independent of $\FF_S'$ under $\PP^:$ and $(\Delta_SX)_\star \PP^:=\LL'(\cdot\vert \Gamma)$. 
\end{enumerate}
\item\label{restriction:II} Conversely, let $\{R_1,R_2\}\subset \mathcal{G}/\mathcal{B}_{[0,\infty]}$, $\PP(R_i<\infty)>0$ for $i\in \{1,2\}$. Set $\PP^i:=( \mathcal{G}\vert_{\{R_i<\infty\}}\ni F\mapsto \PP(F\vert R_i<\infty))$ and suppose $\Delta_{R_i}X$ is independent of $\FF_{R_i}'$ under $\PP^i$ for $i\in \{1,2\}$. 
\begin{enumerate}[(i)]
\item\label{restriction:i} 
If there is an $A\in \Opt$ with, up to evanescence, $\llbracket R_1 \rrbracket=\llbracket R \rrbracket A$, $\llbracket R_2 \rrbracket=\llbracket R \rrbracket A^c$, and if  $\MM:=(\Delta_{R_1}X)_\star \PP^1=(\Delta_{R_2}X)_\star \PP^2$, then $\FF_R'$ is independent of $\Delta_RX$ under $\PP'$ and $(\Delta_RX)_\star \PP'=\MM$.
\item\label{restriction:ii}  If there is a $\Gamma\in \mathcal{D}$ with $\llbracket R_1 \rrbracket=\llbracket R \rrbracket\cap \{\blacktriangle X\in \Gamma\}$ and $\llbracket R_2 \rrbracket=\llbracket R \rrbracket\cap \{\blacktriangle X\notin\Gamma\}$, if $\FF'_R$ is independent of $\mathbbm{1}_\Gamma(\Delta_RX)$ under $\PP'$, and if there is a law $\MM$ on $(\mathbb{D},\mathcal{D})$ satisfying  $\PP'(\Delta_RX\in \Gamma)=\MM(\Gamma)$, $(\Delta_{R_1}X)_\star \PP^1=\MM(\cdot\vert \Gamma)$ and $(\Delta_{R_2}X)_\star \PP^2=\MM(\cdot\vert \mathbb{D}\backslash \Gamma)$, then $\FF_R'$ is independent of $\Delta_RX$ under $\PP'$ and $(\Delta_RX)_\star \PP'=\MM$.
\end{enumerate}
\end{enumerate}
\end{lemma}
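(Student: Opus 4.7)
The backbone of the argument will be the observation that $\llbracket P\rrbracket\subset \llbracket R\rrbracket$ forces $Z_P=Z_R$ on $\{P<\infty\}$ for every optional $Z$, whence $\FF_P'=\FF_R'\vert_{\{P<\infty\}}$ and $\Delta_PX=\Delta_RX$ on $\{P<\infty\}$. Every $\tilde F\in\FF_P'$ thus takes the form $\tilde F=F\cap\{P<\infty\}$ with $F\in\FF_R'$, and the whole of part \ref{restriction:I} will reduce to computing $\PP(F\cap\{\Delta_RX\in G\}\cap\{P<\infty\})$ for generic $F\in \FF_R'$ and $G\in \mathcal{D}$. What changes between the two subcases is the nature of $\{P<\infty\}$.

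For \ref{restriction:I}\ref{restriction:a}, I would first exploit optionality of $A$ to write $\{P<\infty\}=E\cap\{R<\infty\}$ with $E\in \FF_R'$ (the process $\mathbbm{1}_A$ being optional, $(\mathbbm{1}_A)_R$ is $\FF_R'$-measurable on $\{R<\infty\}$). The hypothesized $\PP'$-independence of $\FF_R'$ and $\Delta_RX$ will then factorize the probability as $\PP(R<\infty)\PP'(F\cap E)\LL'(G)$, which rewrites as $\PP(P<\infty)\PP^{:}(\tilde F)\LL'(G)$. For \ref{restriction:I}\ref{restriction:b}, one has instead $\{P<\infty\}=\{R<\infty\}\cap\{\Delta_RX\in\Gamma\}$; the event $\{\Delta_RX\in\Gamma\}$ is $\PP'$-independent of $\FF_R'$, so the same factorization will go through, but with $\LL'(G\cap\Gamma)$ in place of $\LL'(G)$ and $\PP(P<\infty)=\PP(R<\infty)\LL'(\Gamma)$, ultimately yielding the conditional law $\LL'(\cdot\vert \Gamma)$ for $\Delta_PX$ under $\PP^{:}$.

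Part \ref{restriction:II}\ref{restriction:i} is dual and direct: $A$ and $A^c$ partition $\{R<\infty\}$ into the disjoint pieces $\{R_1<\infty\}$ and $\{R_2<\infty\}$, on each of which $\FF_R'$ and $\Delta_RX$ match $\FF_{R_i}'$ and $\Delta_{R_i}X$. My plan is simply to add the two independence identities with the common law $\MM$ and obtain the sought product factorization for $R$.

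The main obstacle will be \ref{restriction:II}\ref{restriction:ii}, since there the split of $\{R<\infty\}$ lives in $\sigma(\Delta_RX)$ rather than in $\FF_R'$. My plan is to decompose $G=(G\cap \Gamma)\sqcup(G\setminus \Gamma)$ and handle the pieces separately: on $G\cap\Gamma$ I will rewrite $\PP(F\cap\{\Delta_RX\in G\cap\Gamma\}\cap\{R<\infty\})$ as $\PP(F\cap\{\Delta_{R_1}X\in G\}\cap\{R_1<\infty\})$, apply the $\PP^1$-independence with law $\MM(\cdot\vert \Gamma)$, and arrive at the expression $\PP'(F\cap\{\Delta_RX\in \Gamma\})\MM(G\cap \Gamma)/\MM(\Gamma)$. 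Here the two extra hypotheses will enter crucially: the $\PP'$-independence of $\FF_R'$ from $\mathbbm{1}_\Gamma(\Delta_RX)$ combined with $\PP'(\Delta_RX\in\Gamma)=\MM(\Gamma)$ will let me replace $\PP'(F\cap\{\Delta_RX\in \Gamma\})$ by $\PP'(F)\MM(\Gamma)$, cancelling the $\MM(\Gamma)$ denominator. A symmetric treatment of $G\setminus\Gamma$ via $R_2$ and summation will then deliver the full factorization $\PP'(F\cap\{\Delta_RX\in G\})=\PP'(F)\MM(G)$. The delicate point is that \emph{both} matching conditions on $\MM$ and the stated independence from $\mathbbm{1}_\Gamma(\Delta_RX)$ are indispensable for these cancellations; weakening either would break the argument.
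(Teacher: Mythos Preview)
Your plan is correct and follows essentially the same route as the paper's proof. The only cosmetic difference is that the paper works directly with generating random variables --- taking $Z\in\Opt/\mathcal{B}_{[0,\infty]}$ and $H\in\mathcal{D}/\mathcal{B}_{[0,\infty]}$ and absorbing $\mathbbm{1}_A$ into $Z$ (case \ref{restriction:a}) or $\mathbbm{1}_\Gamma$ into $H$ (case \ref{restriction:b}) --- whereas you phrase the same computation in terms of events $F\in\FF_R'$ and sets $G\in\mathcal{D}$; the paper also leaves \ref{restriction:II}\ref{restriction:ii} as ``similar'', while you spell it out and correctly identify where the extra hypotheses on $\MM$ and on the $\PP'$-independence of $\FF_R'$ from $\mathbbm{1}_\Gamma(\Delta_RX)$ are used.
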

\begin{remark}\label{remark:restriction}
\leavevmode
\begin{enumerate}
\item \label{remark:restriction:i}
Recall $(\blacktriangle X)_t=\Delta_tX=(X_{t+u}-X_t)_{u\in [0,\infty)}$ for $t\in [0,\infty)$, and note that $\blacktriangle X\in \mathcal{G}\otimes \mathcal{B}_{[0,\infty)}/\mathcal{D}$. 
\item\label{remark:restriction:ii} In terms of $\mathcal{O}$ and $\sigma_{\Omega\times [0,\infty)}(\blacktriangle X)$, the property of $R$ being an independence time for $X$ can be rephrased as follows. Let $\mu^{\PP,R}$ be the unique probability measure $\mu$ on $\mathcal{G}\otimes \mathcal{B}_{[0,\infty)}$ satisfying $\mu(Z)=\PP'( Z_R)$ for $Z\in \mathcal{G}\otimes \mathcal{B}_{[0,\infty)}/\mathcal{B}_{[0,\infty]}$. Then $\Delta_R X$ is independent of $\mathcal{F}_R'$ under $\PP'$ iff $\mathcal{O}$ is independent of $\sigma_{\Omega\times [0,\infty)}(\blacktriangle X)$ under $\mu^{\PP,R}$. 
%\item The converse to \ref{restriction:I}\ref{restriction:b}, analogous to the converse of \ref{restriction:I}\ref{restriction:a} of  \ref{restriction:II}, appears less straightforward. 
\end{enumerate}
\end{remark}
\begin{proof}[Proof of Lemma~\ref{lemma:restriction}]
Take $Z\in \Opt/\mathcal{B}_{[0,\infty]}$ and $H\in \mathcal{D}/\mathcal{B}_{[0,\infty]}$.  \ref{restriction:I}\ref{restriction:a}. The fact that $R$ is an independence time for $X$, together with the optionality of the process $Z\mathbbm{1}_{A}$, yields $\PP(R<\infty)\PP(Z_SH(\Delta_SX);S<\infty)=\PP(Z_S;S<\infty)\PP(H(\Delta_RX);R<\infty)$. 
Setting $Z\equiv 1$ and plugging back in, everything follows. \ref{restriction:I}\ref{restriction:b}. Now $H\mathbbm{1}_\Gamma\in \mathcal{D}/\mathcal{B}_{[0,\infty]}$ and one proceeds in a similar fashion. \ref{restriction:II}\ref{restriction:i}. We have $\PP((Z\mathbbm{1}_A)_RH(\Delta_RX);R<\infty)=\PP((\mathbbm{1}_AZ)_R;R<\infty)\MM(H)$ and $\PP((Z\mathbbm{1}_{A^c})_RH(\Delta_RX);R<\infty)=\PP((\mathbbm{1}_{A^c}Z)_R;R<\infty)\MM(H)$.
Summing the two, the desired conclusion follows upon taking $Z\equiv 1$ and plugging it back in. The proof of \ref{restriction:II}\ref{restriction:ii} is similar. 
\end{proof}
Define now the family $\mathfrak{F}$ of collections $\mathcal{T}$ of $\FF$-stopping times, identified up to a.s. equality, as follows: $\mathcal{T}\in \mathfrak{F}$ if and only if $\mathcal{T}$ is a collection of (equivalence classes mod $\PP$) of $\FF$-stopping times such that the following two conditions are met:
\begin{itemize}\label{collection:T}
\item if $T_1\in \mathcal{T}$, $T_2\in \mathcal{T}$, and $\PP(T_1\ne T_2)\ne 1$, then $\PP(T_1=T_2<\infty)=0$; and 
\item if $T\in \mathcal{T}$, then $\PP(R=T)>0$. 
%$\left(\{T_1,T_2\}\subset \mathcal{T}\land \PP(T_1=T_2)\ne 1\Rightarrow \PP(T_1=T_2<\infty)=0\right)\text{ and }\left(T\in \mathcal{T}\Rightarrow \PP(R=T<\infty)>0\right)$.
\end{itemize}
Then $\mathfrak{F}$ is non-empty, is partially ordered by inclusion, and every linearly ordered subset of $\mathfrak{F}$ admits an upper bound. By Zorn's lemma there is a maximal element $\mathcal{T}\in \mathfrak{F}$. Since $\PP(R<\infty)\leq 1<\infty$, $\mathcal{T}$ is denumerable. By the maximality of $\mathcal{T}$, it follows that $A:=\cup_{T\in \mathcal{T}}\llbracket T\rrbracket$ is an optional set for which $\llbracket R\rrbracket A^c\llbracket S\rrbracket=\emptyset$ up to evanescence for each $\FF$-stopping time $S$. Let the random times $R_1$ and $R_2$ be defined by $\llbracket R_1\rrbracket=\llbracket R\rrbracket A$ and $\llbracket R_2\rrbracket=\llbracket R\rrbracket A^c$. Then the graph of $R_1$ is included up to evanescence in the union of some denumerable family of $\FF$-stopping times ($R_1$ is a `thin random time'), whilst $R_2$ satisfies $\llbracket R_2\rrbracket \llbracket S\rrbracket=\emptyset$ up to evanescence  for each $\FF$-stopping time $S$ ($R_2$ is a `strict random time'). Moreover, when both $\PP(R_1<\infty)$ and $\PP(R_2<\infty)$ are positive, then according to Lemma~\ref{lemma:restriction}, $R$ is an independence time for $X$ if and only if $R_1$ and $R_2$ are both independence times for $X$ with $\Delta_{R_1}X$ having the same law on $\{R_1<\infty\}$ as does $\Delta_{R_2}X$ on $\{R_2<\infty\}$. 

We (may thus) deal with thin and strict random times separately. The former are described completely in:

\begin{proposition}\label{proposition:necessity}
\leavevmode
\begin{enumerate}[(I)]
\item\label{proposition:necessity:i} Suppose $\FF'_R$ is independent of $\Delta_R X$ under $\PP'$. Then, for any $\FF$-stopping time $S$, satisfying $\PP(R=S<\infty)>0$ and $\{R=S<\infty\}\in \overline{\FF_S'\lor \sigma_{\{S<\infty\}}(\Delta_SX)}$, there are an a.s. unique $F_S\in \FF_S'$ and an $\LL$-a.s. unique $G_S\in \mathcal{D}$, such that a.s. $\{R=S<\infty\}=F_S\{\Delta_SX\in G_S\}$, in which case furthermore $(\Delta_RX)_\star\PP'=\PP(X\in \cdot\vert X\in G_S)=\LL(\cdot\vert G_S)$. In particular, if $S_1$ and $S_2$ are two such $\FF$-stopping times, then $\LL$-a.s. $G_{S_1}=G_{S_2}$. 
%\item\label{proposition:necessity:i:III} If $\FF$ is right-continuous, if $\{R=S<\infty\}\in \overline{\FF_S'\lor \sigma(\Delta_SX)}$ for all $\FF$-stopping times $S$, and if $\FF'_R$ is independent of $\Delta_R X$ under $\PP'$, then there exists a sequence $(S_i)_{i\in \mathbb{N}}$ of $\FF$-stopping times, such that with $A:=\cup_{i\in \mathbb{N}}\llbracket S_i\rrbracket$, one has $\llbracket R\rrbracket A^c \llbracket T\rrbracket=\emptyset$ up to evanescence for all $\FF$-stopping times $T$. 
\item\label{proposition:necessity:ii} If a.s. $\{R<\infty\}=\cup_{i\in \mathbb{N}}\{R=S_i<\infty\}$ for a sequence $(S_i)_{i\in \mathbb{N}}$ of $\FF$-stopping times, and if there are a $G\in \mathcal{D}$, and for each $i\in \mathbb{N}$ an $F_i\in \mathcal{F}_{S_i}'$, satisfying $\{R=S_i<\infty\}=F_i \{\Delta_{S_i}X\in G\}$ a.s., then  $\FF'_{R+}$ is independent of $\Delta_R X$ under $\PP'$ and $(\Delta_RX)_\star\PP'=\PP(X\in \cdot\vert X\in G)=\LL(\cdot\vert G)$. 
\end{enumerate}
In particular, if $R\in \overline{\sigma(X)}/\mathcal{B}_{[0,\infty]}$ and $R$ is a thin random time, then $\FF'_R$ is independent of $\Delta_R X$ under $\PP'$ iff there exist $O\in \mathcal{O}$ and $G\in \mathcal{D}$ such that $\llbracket R\rrbracket=O\{\blacktriangle X\in G\}$ up to evanescence, and when so then even $\FF'_{R+}$ is independent of $\Delta_R X$ under $\PP'$ with $(\Delta_RX)_\star\PP'=\LL(\cdot\vert G)$.
\end{proposition}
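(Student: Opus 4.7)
The plan is to reduce the statement to parts \ref{proposition:necessity:i} and \ref{proposition:necessity:ii} already established in the proposition, using the Zorn-based decomposition of $R$ given in the preamble. Since $R$ is thin, that construction (with the strict part $R_2$ having graph evanescent) delivers a denumerable family $\mathcal{T}=(T_i)_{i\in\mathbb{N}}$ of $\FF$-stopping times with $\PP(R=T_i<\infty)>0$ for each $i$, pairwise a.s.-disjoint when finite, satisfying $\{R<\infty\}=\cup_{i\in\mathbb{N}}\{R=T_i<\infty\}$ a.s. (So $\llbracket R\rrbracket\subset \cup_i\llbracket T_i\rrbracket$ up to evanescence.) The two implications of the iff can then be treated separately.

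For the ($\Leftarrow$) direction, given $O\in\Opt$ and $G\in\mathcal{D}$ with $\llbracket R\rrbracket=O\cap\{\blacktriangle X\in G\}$ up to evanescence, evaluation of the optional process $\mathbbm{1}_O$ at the stopping time $T_i$ yields $F_i:=\{O_{T_i}=1\}\cap\{T_i<\infty\}\in\FF'_{T_i}$, and slicing at $T_i(\omega)$ gives $\{R=T_i<\infty\}=F_i\cap\{\Delta_{T_i}X\in G\}$ a.s. Part \ref{proposition:necessity:ii} applied to the sequence $(T_i)$ then delivers both the $\FF'_{R+}$-independence and the law $\LL(\cdot\vert G)$ in one stroke, from which the weaker $\FF'_R$-independence follows. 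Conversely, starting from $\FF'_R$-independence under $\PP'$, I would apply part \ref{proposition:necessity:i} to each $T_i\in\mathcal{T}$: the measurability hypothesis $\{R=T_i<\infty\}\in\overline{\FF'_{T_i}\lor\sigma_{\{T_i<\infty\}}(\Delta_{T_i}X)}$ is verified by noting that on $\{T_i<\infty\}$ the decomposition $X_t=X_{t\wedge T_i}+(\Delta_{T_i}X)_{(t-T_i)^+}$, together with the $\FF_{T_i}$-measurability of $X^{T_i}$ and the fact that $T_i$ itself is $\FF'_{T_i}$-measurable (the process $t\mapsto t$ being optional), yields $\sigma(X)\vert_{\{T_i<\infty\}}\subset \FF'_{T_i}\lor\sigma_{\{T_i<\infty\}}(\Delta_{T_i}X)$, and then $R\in\overline{\sigma(X)}/\mathcal{B}_{[0,\infty]}$ gives what is needed. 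Part \ref{proposition:necessity:i} then produces $F_{T_i}\in\FF'_{T_i}$ and $G_{T_i}\in\mathcal{D}$ with $\{R=T_i<\infty\}=F_{T_i}\cap\{\Delta_{T_i}X\in G_{T_i}\}$ a.s., and, crucially, its last clause forces $G_{T_i}=G_{T_j}=:G$ $\LL$-a.s. across all $i,j$. Writing $F_{T_i}=\tilde F_{T_i}\cap\{T_i<\infty\}$ with $\tilde F_{T_i}\in\FF_{T_i}$ and assembling
\[
O:=\bigcup_{i\in\mathbb{N}}\llbracket (T_i)_{\tilde F_{T_i}}\rrbracket\in\Opt,
\]
one checks slice by slice, using the strong Markov identity $(\Delta_{T_i}X)_\star\PP(\cdot\vert T_i<\infty)=\LL$ to absorb the $\LL$-negligible difference between $G$ and $G_{T_i}$, that $\llbracket R\rrbracket=O\cap\{\blacktriangle X\in G\}$ up to evanescence. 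The strengthening from $\FF'_R$- to $\FF'_{R+}$-independence in the conclusion is then obtained by feeding the constructed $(O,G)$ back into the ($\Leftarrow$) direction.

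The main obstacle is the measurability verification $\{R=T_i<\infty\}\in\overline{\FF'_{T_i}\lor\sigma_{\{T_i<\infty\}}(\Delta_{T_i}X)}$: this is where the hypothesis $R\in\overline{\sigma(X)}/\mathcal{B}_{[0,\infty]}$ must be brought to bear, and it requires a careful argument relating $\sigma(X)$ on $\{T_i<\infty\}$ to the pre-/post-$T_i$ split, plus some bookkeeping to pass from $\sigma(X)$ to its $\PP$-completion restricted to $\{T_i<\infty\}$. Once this ingredient is in hand the two parts of the proposition mesh cleanly, the only other technicality being to see that optional sets of the form $\llbracket T_{F}\rrbracket$ ($T$ a stopping time, $F\in\FF_T$) really are optional and that their denumerable union $O$ intersects $\{\blacktriangle X\in G\}$ in exactly $\llbracket R\rrbracket$.
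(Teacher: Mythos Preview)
Your proposal is correct and follows essentially the same route as the paper: both directions are reduced to parts \ref{proposition:necessity:i} and \ref{proposition:necessity:ii}, with $O$ assembled as a countable union of graphs $\llbracket S_i\rrbracket\cap(F_{S_i}\times[0,\infty))$ and $G$ taken as the common $G_{S_i}$. Your write-up is in fact more explicit than the paper's terse final paragraph, particularly in spelling out why $R\in\overline{\sigma(X)}/\mathcal{B}_{[0,\infty]}$ guarantees the measurability hypothesis of \ref{proposition:necessity:i} (which the paper relegates to a one-line remark) and in noting the feedback loop that upgrades $\FF'_R$- to $\FF'_{R+}$-independence.
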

The proof of Proposition~\ref{proposition:necessity} will follow on p.~\pageref{proof:thin} after we have given some remarks and examples.
\begin{remark}
\leavevmode
\begin{enumerate}
\item For sure $\{R=S<\infty\}\in \overline{\FF_S'\lor \sigma_{\{S<\infty\}}(\Delta_SX)}$ for all $\FF$-stopping times $S$, if $R\in \overline{\sigma(X)}/\mathcal{B}_{[0,\infty]}$. 
 \item $X$ is also a L\'evy process with respect to the usual augmentation of $\FF$. 
\item It is conjectured that there is no independence time for $X$ belonging to $\overline{\sigma(X)}/\mathcal{B}_{[0,\infty]}$, that is finite and equal to a stopping time of $X$ with a positive probability, yet whose graph fails to be included, up to evanescence, in the union of the graphs of a denumerable family of stopping times of the right-continuous augmentation of $\FF$. %However the author was not able to prove this. %Vice versa, according to Proposition~\ref{proposition:necessity}\ref{proposition:necessity:i} and Lemma~\ref{lemma:restriction}\ref{restriction:II}, whether or not $R$ is an independence time can be checked separately on the optional set given by
\end{enumerate}
\end{remark}
%
%\begin{question}
%Under the provisions of \ref{proposition:necessity:i:III}, if $A$ is not evanescent, is it possible for $\llbracket R\rrbracket A^c$ not to be evanescent?
%\end{question}
\begin{example}\label{example:thin-times}
Let $X$ be a linear Brownian motion with strictly negative drift and continuous sample paths, vanishing at zero. Denote by $\overline{X}$ the running supremum of $X$ and by $L$ the last time that $X$ is at its running supremum (set $L=\infty$ on the negligible event  that there is no such last time). Let $R$ be equal to $0$ on the event that $X$ never reaches the level $1$;  or else let it be equal to the first hitting time of $-1$ by the process $\Delta_LX$ on the event $\{L<\infty\}$; $R=\infty$ otherwise. For $q\in \mathbb{Q}_{>0}$ let $T_q$ be the first entrance time into the set $\{1\}$ of the process $\overline{X}-X$ \emph{strictly after} time $q$, and let $S_q:=(T_q)_{\{X_s\ne \overline{X}_s\text{ for }s\in (q,T_q)\}}$. Then $\{R<\infty\}=\cup_{q\in \mathbb{Q}_{>0}}\{R=S_q<\infty\}\cup \{R=0\}$, $\{R=0\}=\{X\text{ never reaches }1\}$ and for $q\in \mathbb{Q}_{>0}$, letting $L_q$ be the last time $X=\overline{X}$ on $[0,q]$, $\{R=S_q<\infty\}=\{S_q<\infty\}\{1\leq \overline{X}_{q}>X_s>\overline{X}_{q}-1\text{ for }s \in (L_q,S_q)\}\{\Delta_{S_q}X\text{ never reaches }1\}$.  By Proposition~\ref{proposition:necessity}\ref{proposition:necessity:ii} $R$ is an independence time for $X$ (even with $\FF'_R$ replaced by $\FF'_{R+}$). 

% For $n\in \mathbb{N}_0$, let $S_n$ be the first passage time of $X$ above the level $n$. Set $R:=\sum_{n\in \mathbb{N}_0}S_i\mathbbm{1}(\Delta_{S_i}X\text{ never reaches the level $1$})$ (the first time we pass a nonnegative integer level for the first time and then fail ever to reach the next level). By Proposition~\ref{proposition:necessity}\ref{proposition:necessity:ii} $R$ is an independence time for $X$ (even with $\FF'_R$ replaced by $\FF'_{R+}$). 
\end{example}

\begin{example}\label{example:thin-times:W-H}%Addded post-submission.
Let $X$ be real-valued ($d=1$), let $\overline{X}$ be the running supremum of $X$, and assume $0$ is irregular for itself for the drawdown (a.k.a. reflected in the supremum, regret) process $\overline{X}-X$ (equivalently, $0$ is irregular for $[0,\infty)$ for the process $X$). Recalling Example~\ref{example:WH} it will again be interesting to see how in this case the independence property of the Wiener-Hopf factorization for L\'evy processes (see \cite[Lemma~VI.6(i)]{bertoin}) falls out of the above. Let indeed $N$ be a homogeneous Poisson process of rate $\lambda\in [0,\infty)$, vanishing at zero, adapted to $\FF$, independent of $X$, and such that the pair $(X,N)$ is a L\'evy process relative to $\FF$. (The case $\lambda=0$ corresponds to the zero process $N=0$ a.s..) Note that $e:=\inf\{t\in [0,\infty):N_t>0\}$ is an $\FF$-stopping time independent of $X$ having the exponential distribution with mean $\lambda^{-1}$ ($e=\infty$ a.s. when $\lambda=0$). Let finally $R=\sup\{t\in [0,e):X_t=\overline{X}_t\}$. We insists of course that $\PP(R<\infty)>0$ (equivalently, in this case, $\PP(R<\infty)=1$; the latter being automatic when $\lambda>0$, whilst it obtains iff $X$ drifts to $-\infty$ when $\lambda=0$). It is clear that under the assumption on the drawdown process the successive visits of $\overline{X}-X$ to $0$ form a sequence of $\FF$-stopping times whose graphs exhaust the graph of $R$ up to evanescence; $R$ is a thin random time. Moreover, up to evanescence, $$\llbracket R\rrbracket=\{\overline{X}=X\}\llbracket 0,e\rrparenthesis\{\text{the first component of } \blacktriangle (X,N)\text{ does not return to zero up to strictly before}$$ $$\text{the first time that the second component of }\blacktriangle (X,N)\text{ increases}\}.$$ It follows that Proposition~\ref{proposition:necessity} applies and, under $\PP'$, yields the independence of $\FF'_{R+}$ (in particular of the stopped process $X^R$) from $\Delta_R(X,N)$ (in particular from $\Delta_R X$). Furthermore, the distribution of $\Delta_RX$ on $\{R<\infty\}$ is that of $X$ conditioned not to return to its running supremum until strictly before an independent exponential random time of rate $\lambda$.
\end{example}
\begin{proof}[Proof of Proposition~\ref{proposition:necessity}]\label{proof:thin}
\ref{proposition:necessity:i}. Uniqueness of $G_S$ and $F_S$ is a consequence of Lemma~\ref{lemma} and of the strong Markov property of $X$. Set $\PP^:=(\mathcal{G}\vert_{\{S<\infty\}}\ni A\mapsto \PP(A\vert S<\infty))$. We verify that $\PP^:$-a.s. $\PP^:(R=S)\mathbbm{1}(R=S)=\PP^:(R=S\vert \FF_S')\PP^:(R=S\vert \sigma_{\{S<\infty\}}(\Delta_SX))$. Since $\{R=S<\infty\}\in \overline{\FF_S'\lor \sigma_{\{S<\infty\}}(\Delta_SX)}$ and since $\FF'_S/\mathcal{B}_{\{0,1\}}=\{Z_S:Z\in \mathcal{O}/\mathcal{B}_{\{0,1\}}\}$, by monotone class and the strong Markov property of $X$, this will follow, if we have shown that for all $Z\in \mathcal{O}/\mathcal{B}_{[0,\infty]}$ and  $H\in \mathcal{D}/\mathcal{B}_{[0,\infty]}$, $\PP^:(R=S)\PP^:(Z_SH(\Delta_SX);R=S)=\PP^:(Z_S;R=S)\PP^:(H(\Delta_SX);R=S)$.  But this follows from Lemma~\ref{lemma:restriction}\ref{restriction:I}\ref{restriction:a} applied to the optional set $\llbracket S\rrbracket$. By Lemma~\ref{lemma}, this establishes the existence of $G_S$ and $F_S$. Next we compute, for any $H\in \mathcal{D}/\mathcal{B}_{[0,\infty]}$,\footnotesize
$$\PP^:(R=S)\PP(H(X);X\in G_S)=\PP^: (F_S)\PP^:(\Delta_SX\in G_S)\PP^:(H(\Delta_SX);\Delta_SX\in G_S)=\PP^: (H(\Delta_SX);F_S \{\Delta_SX\in G_S\})\PP(X\in G_S)$$ $$=\PP^:(H(\Delta_SX);R=S)\PP(X\in G_S)=\PP^:(H(\Delta_RX);R=S)\PP(X\in G_S)=\PP(X\in G_S)\frac{\PP^:(R=S)\PP(H(\Delta_RX);R<\infty)}{\PP(R<\infty)},$$ \normalsize 
where the final equality is a consequence of the independence of $\Delta_R X$ and $\FF'_R$ under $\PP'$ (as applied to the optional process $\mathbbm{1}_{\llbracket S\rrbracket}$).
%i.e. $\PP(R<\infty)\PP(H(X);X\in G_S)=\PP(X\in G_S)\PP(H(\Delta_RX);R<\infty)$. 
%The second claim now follows precisely as it did in the proof of  \ref{a}$\Rightarrow$\ref{b} of Theorem~\ref{theorem}. %,  $\PP(H(X)\vert X\in G_{S_1})=\PP(H(\Delta_RX)\vert R<\infty)=\PP(H(X)\vert X\in G_{S_2})$ for all  $H\in \mathcal{D}/\mathcal{B}_{[0,\infty]}$, which implies that $\LL(G_{S_1}\triangle G_{S_2})=1$. 
Similarly as in the proof of Theorem~\ref{theorem}, the rest of the claims of \ref{proposition:necessity:i} follow.

\ref{proposition:necessity:ii}. Let $S$ be an $\FF$-stopping time. Then a.s. $\{R=S<\infty\}=\cup_{i\in \mathbb{N}}\{R=S<\infty\} \{S=S_i\}=%\cup_{i\in \mathbb{N}}F_i\{\Delta_{S_i}X\in G\}\{S=S_i\}=
\cup_{i\in \mathbb{N}}F_i\{S=S_i\}\{\Delta_{S}X\in G\}$ where $\cup_{i\in \mathbb{N}}F_i\{S=S_i\}\in \mathcal{F}_S'$. Hence \cite[Theorem~3.31]{semimtgs} we may assume without loss of generality that the graphs of the $S_i$, $i\in \mathbb{N}$, are pairwise disjoint. Then we may compute for $Z\in \mathrm{Prog}/\mathcal{B}_{[0,\infty]}$ and $H\in \mathcal{D}/\mathcal{B}_{[0,\infty]}$, using the strong Markov property of $X$:\footnotesize $$\PP(Z_RH(\Delta_RX);R<\infty)=\sum_{i\in \mathbb{N}}\PP(Z_{S_i}H(\Delta_{S_i}X);F_i\{\Delta_{S_i}X\in G\})%$$ $$=\sum_{i\in \mathbb{N}}\PP(Z_{S_i};F_{S_i})\PP(H(\Delta_{S_i}X);\{\Delta_{S_i}X\in G\})
=\sum_{i\in \mathbb{N}}\PP(Z_{S_i};F_{i}\{S_i<\infty\})\PP(H(X);X\in G)$$ \normalsize and we conclude by taking $Z\equiv 1$, $H\equiv 1$ and $Z\equiv 1\equiv  H$ in turn. 

For the final observation of the proposition, we note as follows. By definition, $R$ being a thin random time means that there exists a denumerable family of $\FF$-stopping times $(S_i)_{i\in \mathcal{I}}$, the graphs of the members of which may be assumed pairwise disjoint, and such that $\llbracket R\rrbracket\subset \cup_{i\in \mathbb{N}}\llbracket S_i\rrbracket$ up to evanescence and $\PP(R=S_i)>0$ for all $i\in \mathcal{I}$. Then if $R$ is an independence time for $X$, we may use \ref{proposition:necessity:i}, setting $O=\cup_{i\in \mathcal{I}}\llbracket S_i\rrbracket(F_{S_i}\times [0,\infty))$ and $G$ equal to any of the $G_{S_i}$s. This establishes the necessity of the condition. Sufficiency follows from \ref{proposition:necessity:ii}, upon taking $F_i\in \FF'_{S_i}$, so that $\mathbbm{1}_{F_i}=(\mathbbm{1}_O)_{S_i}\mathbbm{1}(S_i<\infty)$ for $i\in \mathcal{I}$. 
\end{proof}

Strict independence times appear to be more subtle. We give below a sufficient condition for a strict random time to be an independence time for $X$ (Proposition~\ref{proposition:sufficient:two}). Some preliminary notions and results are needed to this end.

\begin{definition}
A $T\in \mathcal{G}/\mathcal{B}_{[0,\infty]}$ is an incremental terminal time (ITT) if, for all $t\in [0,\infty)$ with $\PP(T>t)>0$, conditionally on $\{T>t\}$, $T-t$ is independent of $\FF_t\vert_{\{T>t\}}$ and has the same law as $T$. 
\end{definition}
\begin{remark}\label{remark:ITT}
\leavevmode
\begin{enumerate}
\item\label{remark:ITT:i} If $T$ is an ITT, then: by the characterization of the solutions to Cauchy's functional equation, $\PP(T=0)=1$, \emph{or} $\PP(T=\infty)=1$,  \emph{or} $T$ has an exponential distribution; furthermore, for all $\FF$-stopping times $S$, satisfying $\PP(T>S)>0$, via the usual extension from deterministic to stopping times, conditionally on $\{T>S\}$, $T-S$ is independent of $\FF_S\vert_{\{T>S\}}$ and has the same law as $T$. 
\item\label{remark:ITT:ii}  ITTs are the analogue of terminal times (e.g. \cite[Definition~4.1]{sharpe-getoor}) from the Markov context: If $T$ is an $\FF$-stopping time for which there is a $K\in \mathcal{D}/\mathcal{B}_{[0,\infty]}$ with $T=t+K(\Delta_tX)$ a.s. on $\{T>t\}$ for each $t\in [0,\infty)$, then by the simple Markov property of $X$, $T$ is an ITT. 
\item\label{remark:ITT:iii} If $T$ is an ITT and $e\in \mathcal{G}/\mathcal{B}_{[0,\infty]}$ is an exponentially distributed random time independent of $\FF_\infty\lor \sigma(T)$ then $e\land T$ is an ITT.
\end{enumerate}
\end{remark}

\begin{examples}
ITTs that fall under Remark~\ref{remark:ITT}\eqref{remark:ITT:ii} include the deterministic times $\infty$ and $0$ and, assuming $\FF$ is right-continuous --- this is not a major assumption, since $X$ is a L\'evy process also relative to the right-continuous augmentation of $\FF$ --- for an open $O\subset \mathbb{R}^d$, the time of the first entrance into (or hitting of) the set $O$ by the jump process $\Delta X$ of $X$ (\cite{sokol} proves that such hitting times are stopping times when $d=1$ -- a perusal of the proof given, convinces one that the dimension $d=1$ has nothing special to it in this regard; when $\FF$ is not right-continuous, then still these hitting times are ITTs, though they may fail to be stopping times). Indeed, by the D\'ebut theorem \cite[Theorem~4.2]{semimtgs},  when $\GG$ is universally complete, any first entrance time of the jump process of $X$ into a set from $\mathcal{B}_{\mathbb{R}^d}$, is an ITT (and an $\FF$-stopping time if further $(\Omega,\GG,\FF,\PP)$ satisfies ``the usual hypotheses'' \cite[Theorem~4.30]{semimtgs}).
\end{examples}

\begin{definition}
Let $T$ be an ITT. A process $A\in \mathcal{G}\otimes \mathcal{B}_{[0,\infty)}/\mathcal{B}_{[0,\infty)}$, nondecreasing and right-continuous, is an incremental functional (IF) up to $T$ if (i) $A_0=0$ a.s., (ii) $A=A^T$ a.s. %, (ii) $\lim_{T-}A=\infty$ on $\{A_T=\infty\}\cap \{T<\infty\}$ 
and (iii)
for all $t\in [0,\infty)$, if $\PP(T>t)>0$, then, conditionally on $\{T>t\}$, $(\Delta_t X,\Delta_tA)$ is independent of $\FF_t\vert_{\{T>t\}}$ and has the same law as $(X,A)$. 
\end{definition}
\begin{remark}\label{remark:IF}
\leavevmode
\begin{enumerate}
%\item Given an IF $A$ up to $T$, there exists a unique random measure $dA$ on $\mathcal{B}_{[0,\infty)}$, carried by the interval $[0,T]\cap [0,\infty)$, such that $\mu([0,t])=A_t$ for all $t\in [0,\infty)$ and $\mu(\{T\})=(A_T-A_{T-})\mathbbm{1}_{[0,\infty)}(A_T)$.  
\item If $A$ is an IF up to $T$ and $H\in \mathcal{D}/\mathcal{B}_{[0,\infty]}$ is such that for all $t\in [0,\infty)$, $\tilde{A}_t:=\int_{[0,t]}H(\Delta_s X) dA_s$ is finite-valued, then $\tilde{A}:=(\tilde{A}_t)_{t\in [0,\infty)}$ is an IF up to $T$. 
\item\label{remark:IF:ii} 
If $A\in \mathcal{G}\otimes\mathcal{B}_{[0,\infty)}/\mathcal{B}_{[0,\infty)}$ is nondecreasing, right-continuous, vanishing a.s. at zero, if $T$ is an $\FF$-stopping time that is an ITT rendering $A^T=A$ a.s., and if there is a $J\in \mathcal{D}/\mathcal{D}$ such that $\Delta_tA=J(\Delta_tX)$ a.s. on $\{T>t\}$ for all $t\in [0,\infty)$, then by the simple Markov property of $X$, $A$ is an IF up to $T$. IFs are the analogue of raw additive functionals (e.g. \cite[Definition~4.2]{sharpe-getoor}) from the Markov context.
\item\label{remark:IF:iii}  If $A$ is an IF up to $T$, then in fact for all $\FF$-stopping times $S$, satisfying $\PP(T>S)>0$, conditionally on $\{T>S\}$, $(\Delta_S X,\Delta_SA)$ is independent of $\FF_S\vert_{\{T>S\}}$ and has the same law as $(X,A)$. This is proved in a manner that is entirely analogous to the proof of the strong Markov property of $X$ from the simple Markov property of $X$ (see e.g. \cite[proof of Theorem~40.10]{sato}). 
\item\label{remark:IF:iv}   If $A$ is an IF up to $T$ and $e\in \mathcal{G}/\mathcal{B}_{[0,\infty]}$ is an exponentially distributed random time independent of $\FF_\infty\lor\sigma(A,T)$, then the stopped process $A^e$ is an IF up to $T\land e$ (cf. Remark~\ref{remark:ITT}\eqref{remark:ITT:iii}). 
\end{enumerate}
\end{remark}

\begin{lemma}\label{lemma:IF}
Let $T$ be an ITT, $A$ an IF up to $T$, $H\in \mathcal{D}/\mathcal{B}_{[0,\infty]}$ and $M\in \Opt/\mathcal{B}_{[0,\infty]}$. Set $\lambda:=(\PP T)^{-1}\in [0,\infty]$. Assume that (i) for all $N\in (0,\infty)$, $\PP\int_{(0,\epsilon]}H(\Delta_uX)\land NdA_u<\infty$ for some $\epsilon\in (0,\infty)$ and that (ii) $\PP((\Delta A)_S;S<\infty)=0$ for all $\FF$-stopping times $S$ ($\Delta A$ being the jump process of $A$). Then $$\PP\int_{[0,\infty)} M_uH(\Delta_uX)dA_u=\PP\int_0^1H(\Delta_uX)dA_u\PP\int_0^T M_udu\frac{\lambda}{1-e^{-\lambda}}
%\begin{cases}
%1 & \text{when }T=\infty\text{ a.s.}\\
%%0 & \text{when }T=0\text{ a.s.}\\
%\frac{\lambda}{1-e^{-\lambda}}& \text{otherwise }
%\end{cases}.
$$ with the quotient understood in the limiting sense when $\lambda\in \{0,\infty\}$. (It is part of the statement that the inclusion or the exclusion of the upper delimiter $1$ in the integral $\PP\int_0^1\ldots dA_u$ is immaterial.)
\end{lemma}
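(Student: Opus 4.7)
The plan is to establish the identity by a monotone class reduction on the optional process $M$. I view both sides as non-negative measures in $M$ on $\Opt$, namely $\mu(M) := \PP\int M_u H(\Delta_u X) dA_u$ and $\nu(M) := c \, \PP\int_0^T M_u du$ where $c := \PP\int_0^1 H(\Delta_u X) dA_u \cdot \lambda/(1-e^{-\lambda})$. After truncating $H$ by $H \wedge N$ so that both $\mu$ and $\nu$ are finite (in the generic case $\lambda \in (0, \infty)$; see below for the degenerate endpoints), it suffices to verify $\mu = \nu$ on the $\pi$-system $\mathcal{C} := \{A \times (t, \infty) : t \geq 0, \ A \in \FF_t\}$. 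Together with $\Omega \times \{0\}$ (on which both measures vanish by $A_0 = 0$ a.s.), this generates $\Opt$, so the standard uniqueness of finite measures agreeing on a generating $\pi$-system forces $\mu = \nu$. Monotone convergence $N \uparrow \infty$ then extends to general $H$.

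For $M = \mathbbm{1}_{A \times (t, \infty)}$ with $A \in \FF_t$, I apply the IF-at-stopping-time property (Remark~\ref{remark:IF}\eqref{remark:IF:iii}) at the deterministic time $t$: conditionally on $\{T > t\}$, $(\Delta_t X, \Delta_t A)$ is independent of $\FF_t\vert_{\{T > t\}}$ and has the same law as $(X, A)$, and by the ITT property (Remark~\ref{remark:ITT}\eqref{remark:ITT:i}) moreover $T - t$ there has the law of $T$. Since $A = A^T$, both $\int_{(t, \infty)} H(\Delta_u X) dA_u$ and $(T-t)^+$ vanish on $\{T \leq t\}$; changing variables $u = t + v$ yields
\[
\mu(A \times (t, \infty)) = \PP(A \cap \{T > t\}) \cdot \PP\int_{(0, \infty)} H(\Delta_v X) dA_v, \qquad \nu(A \times (t, \infty)) = c \, \PP(A \cap \{T > t\})/\lambda.
\]
So the claim on $\mathcal{C}$ reduces to the renewal-type identity $\PP\int_{(0, \infty)} H(\Delta_u X) dA_u = (1 - e^{-\lambda})^{-1} \PP\int_0^1 H(\Delta_v X) dA_v$.

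To prove this I partition $[0, \infty) = \bigsqcup_{n \in \mathbb{N}_0} [n, n+1)$ and iterate the IF property at each integer. Two essential technical points: first, assumption (ii) at the (deterministic) stopping time $S = n$ forces $\PP[\Delta A_n] = 0$, and as $\Delta A_n \geq 0$ this gives $\Delta A_n = 0$ a.s., so the endpoint $n$ is immaterial in $\int_{[n, n+1)} dA$; second, on $\{T \leq n\}$ the integral vanishes since $A$ is constant past $T$. On $\{T > n\}$, the IF property at $n$ gives $\PP\left[\int_{[n, n+1)} H(\Delta_u X) dA_u; \, T > n\right] = \PP(T > n) \cdot \PP\int_0^1 H(\Delta_v X) dA_v$, and summing the geometric series $\sum_{n \geq 0} \PP(T > n) = \sum_{n \geq 0} e^{-n\lambda} = (1 - e^{-\lambda})^{-1}$ closes the argument. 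Finiteness of $\PP\int_0^1 (H \wedge N)(\Delta_u X) dA_u$, needed to legitimise the truncation step, follows by the same partition argument but now on intervals of length $\epsilon$ from assumption (i). The degenerate cases $\lambda \in \{0, \infty\}$ are handled by direct inspection: $T = 0$ a.s.\ forces $A \equiv 0$ so everything vanishes, while $T = \infty$ a.s.\ makes both sides $\infty$ unless $\PP\int_0^1 H dA = 0$, matching the limit $\lambda/(1-e^{-\lambda}) \to 1$. The main technical subtlety throughout is the endpoint bookkeeping for $dA$ on the partitioning intervals, which is exactly what assumption (ii) is designed to handle.
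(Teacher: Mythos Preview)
Your monotone class argument has a genuine gap: the $\pi$-system $\mathcal{C}=\{A\times(t,\infty):t\geq 0,\ A\in\FF_t\}$ together with $\Omega\times\{0\}$ generates the \emph{predictable} $\sigma$-field, not the optional one. Indeed $A\times(t,\infty)=\llparenthesis t_A,\infty\rrparenthesis$ is a predictable stochastic interval, and all your generators lie in $\mathcal{P}$; in any filtration with totally inaccessible stopping times (e.g.\ the natural filtration of a Poisson process) $\mathcal{P}\subsetneq\mathcal{O}$. Consequently your argument, as written, only yields the identity for predictable $M$, whereas the lemma asserts it for optional $M$. The paper avoids this by checking the identity on the generators $\mathbbm{1}_{\llbracket S,\infty\rrparenthesis}$ for \emph{arbitrary} $\FF$-stopping times $S$, which do generate $\mathcal{O}$; this is precisely where the full strength of Remark~\ref{remark:IF}\eqref{remark:IF:iii} (the IF property at stopping times, not just deterministic times) is needed, and where assumption~(ii) enters to pass from $\int_{[S,\infty)}$ to $\int_{(S,\infty)}$. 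You cite Remark~\ref{remark:IF}\eqref{remark:IF:iii} but only invoke it at deterministic $t$, which is merely the definition of an IF.

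Your approach can be rescued: once you have $\mu=\nu$ on $\mathcal{P}$, observe that both measures annihilate every graph $\llbracket S\rrbracket$ (for $\nu$ because it is absolutely continuous with respect to $\PP\otimes\mathrm{Leb}$; for $\mu$ by assumption~(ii) and boundedness of the truncated $H$), then approximate a general stopping time from above by discrete ones to pass from $\llparenthesis S,\infty\rrparenthesis$ to $\llbracket S,\infty\rrparenthesis$. But this extra step is essential and is missing. Separately, your treatment of $\lambda=0$ is not quite right: it is not true that ``both sides are $\infty$'' for general optional $M$ (take $M=\mathbbm{1}_{[0,1]}$), and more importantly your finite-measure $\pi$-$\lambda$ argument fails outright when $\nu$ is infinite. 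The paper handles this cleanly by minimising $T$ with an independent exponential time and passing to the limit, reducing everything to the case $\lambda\in(0,\infty)$.
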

%
%\begin{definition}
%A process $A\in \mathcal{G}\otimes \mathcal{B}_{[0,\infty)}/\mathcal{B}_{[0,\infty)}$, nondecreasing and right-continuous, is a raw incremental functional (IF) if 
%for all $t\in [0,\infty)$, $(\Delta_t X,\Delta_tA)$ is independent of $\FF_t$ and has the same law as $(X,A)$. 
%\end{definition}
%\begin{remark}\label{remark:IF}
%\leavevmode
%\begin{enumerate}
%\item If $A$ is an IF, then a.s. $A_0=0$.
%\item\label{remark:IF:ii} If $A\in \mathcal{G}\otimes\mathcal{B}_{[0,\infty)}/\mathcal{B}_{[0,\infty)}$ is nondecreasing right-continuous, and there is a $J\in \mathcal{D}/\mathcal{B}_{[0,\infty)}$ such that $\Delta_tA=J(\Delta_tX)$ a.s. for all $t\in [0,\infty)$, then by the simple Markov property of $X$, $A$ is an IF. 
%\end{enumerate}
%\end{remark}
%
%\begin{lemma}\label{lemma:IF}
%Let $A$ be an IF, $H\in \mathcal{D}/\mathcal{B}_{[0,\infty]}$ and $M\in \Opt/\mathcal{B}_{[0,\infty]}$. Assume that for all $N\in (0,\infty)$, $\PP\int_{(0,\epsilon]}H(\Delta_uX)\land NdA_u<\infty$ for some $\epsilon\in (0,\infty)$. Then $$\PP\int_{[0,\infty)} M_uH(\Delta_uX)dA_u=\int_0^1H(\Delta_uX)dA_u\PP\int_0^\infty M_udu.$$
%\end{lemma}
\begin{proof}
The case $\PP(T=0)=1$ is trivial; assume $\PP(T=0)<1$. By Remark~\ref{remark:IF}\eqref{remark:IF:iv} and monotone convergence, we may assume without loss of generality (possibly by exploiting an extension of the underlying filtered probability space on which there lives a mean one exponentially distributed random time $e$ independent of $\FF_\infty\lor\sigma(A,T)$) that $\PP(T<\infty)=1$ (one minimizes $T$ by $ne$, stops $A$ at $T\land (ne)$, and sends $n\to\infty$). By Remark~\ref{remark:ITT}\eqref{remark:ITT:i} there is then a $\lambda\in (0,\infty)$ with $\PP(T>t)=e^{-\lambda t}$ for all $t\in [0,\infty)$. By monotone convergence we may  assume that $M$ are $H$ bounded and hence (by assumption (i)) that $\PP\int_{(0,\epsilon]}H(\Delta_uX)dA_u<\infty$ for some $\epsilon\in (0,\infty)$. 
%
%We have, for $t\in [0,\infty)$, $\PP \int_{(t,\infty)}H(\Delta_uX)dA_u=\PP(\int_{(0,\infty)}H(\Delta_u\Delta_tX)d(\Delta_tA)_u;t<T)=\PP(t<T)\PP  \int_{(0,\infty)}H(\Delta_uX)dA_u$. 
%By monotone convergence we may assume that $H$ is bounded and hence that $\PP\int_{(0,\epsilon]}H(\Delta_uX)dA_u<\infty$ for some $\epsilon\in (0,\infty)$. Consider the function $f_H:=((0,\infty)\ni t\mapsto \PP\int_{(0,t]}H(\Delta_uX)dA_u)$. Since $A$ is an IF up to $T$, it is seen that $f_H$ is additive: for $\{t,s\}\subset [0,\infty)$, $f_H(t+s)=f_H(t)+\PP\int_{(t,t+s]}H(\Delta_uX)dA_u=f_H(t)+
%\PP\int_{(0,s]}H(\Delta_u\Delta_tX)d(\Delta_tA)_u=f_H(t)+f_H(s)$.
%It is also nondecreasing. Therefore, by the characterization of the solutions of Cauchy's functional equation, we have $f_H(t)=c(H)t$ for all $t\in [0,\infty)$, for some (unique) $c(H)\in [0,\infty)$. By monotone class then, the measure $\mu_H:=(\mathcal{B}_{[0,\infty)}\ni B\mapsto \PP\int_{B}H(\Delta_uX)dA_u)$ is the multiple $c(H)=\PP\int_0^1H(\Delta_uX)dA_u$ of Lebesgue measure. 

Define $f_H:=([0,\infty)\ni t\mapsto  \PP\int_{(0,t]}H(\Delta_uX)dA_u)$. For $t\in [0,\infty)$, $s\in [t,\infty)$, we have \footnotesize 
\begin{equation}\label{eq:fundamental}
\PP \left(\int_{(t,s]}H(\Delta_uX)dA_u\right)=\PP \left(\int_{(0,s-t]}H(\Delta_u\Delta_tX)d(\Delta_tA)_u;t<T\right)=\PP(t<T)\PP\int_{(0,s-t]}H(\Delta_uX)dA_u.\end{equation}\normalsize
We find that $f_H$ is finite-valued, nondecreasing, and satisfies the functional equation $f_H(s)-f_H(t)=e^{-\lambda t}f_H(s-t)$. Set $s=t+1$, to see that the limit $f_H(\infty):=\lim_{\infty}f_H$ is finite, then send $s\to \infty$, to obtain $f_H(\infty)-f_H(t)=e^{-\lambda t}f_H(\infty)$. It follows that $f_H(\infty)=\frac{f_H(1)\lambda}{1-e^{-\lambda}}\PP T$. 

It now follows from Remarks~\ref{remark:ITT}\eqref{remark:ITT:i} and~\ref{remark:IF}\eqref{remark:IF:iii}, and from assumption (ii), that when $M=\mathbbm{1}_{\llbracket S,\infty\rrparenthesis}$ for an $\FF$-stopping time $S$, then $$\PP\int_{[0,\infty)} M_uH(\Delta_uX)dA_u=\PP\left(\int_{(S,\infty)}H(\Delta_uX)dA_u;S<T\right)=\PP\left(\int_{(0,\infty)} H(\Delta_u\Delta_SX)d(\Delta_SA)_u;S<T\right)$$ $$=\PP(S<T)\PP T\frac{f_H(1)\lambda}{1-e^{-\lambda}}=\PP(T-S;S<T)\frac{f_H(1)\lambda}{1-e^{-\lambda}}=\frac{\lambda}{1-e^{-\lambda}}\PP\int_0^1H(\Delta_uX)dA_u\PP\int_0^T M_udu.$$
The class of processes $M$ of the form considered is closed under multiplication and generates the optional $\sigma$-field \cite[Theorem~3.17]{semimtgs}. By monotone class we conclude. 
\end{proof}

\begin{proposition}\label{proposition:sufficient:two}
Suppose there exist $T$, an ITT, and $A$, an IF up to $T$, satisfying $\PP(A_\epsilon)<\infty$ for some (then all) $\epsilon\in(0,\infty)$ and $\PP((\Delta A)_S;S<\infty)=0$ for all $\FF$-stopping times $S$ ($\Delta A$ being the jump process of $A$). Suppose furthermore that there exists an $O\in\mathcal{O}/\mathcal{B}_{[0,\infty)}$, such that there is the following equality of random measures on $\mathcal{B}_{[0,\infty)}$: 
\begin{equation}\label{eq:LP-independence:two}
\delta_R\mathbbm{1}(R<\infty)=O\cdot dA \text{ a.s.},
\end{equation}
where $\delta_R$ is the Dirac mass at $R$ and $(O\cdot dA)(C):=\int_C O_sdA_s$ for $C\in \mathcal{B}_{[0,\infty)}$. Then $\FF_R'$ is independent of $\Delta_RX$ under $\PP'$, and the law of $\Delta_RX$ under $\PP'$ is given as follows: for $H\in \mathcal{D}/\mathcal{B}_{[0,\infty]}$, $\PP'(H(\Delta_RX))=\PP\int_0^1H(\Delta_uX)dA_u/\PP A_1$. 
\end{proposition}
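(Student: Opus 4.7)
The strategy is to exploit the measure identity~\eqref{eq:LP-independence:two} to rewrite expectations of the form $\PP(Z_R H(\Delta_R X); R<\infty)$ as expectations of integrals against $dA$, and then to invoke Lemma~\ref{lemma:IF} to split these integrals into a product of a factor depending only on $H$ (the incremental future of $X$) and a factor depending only on $Z$ (the optional past). This multiplicative structure will automatically yield both the independence assertion and the stated law of $\Delta_R X$.

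Concretely, fix bounded $Z\in \Opt/\mathcal{B}_{[0,\infty]}$ and bounded $H\in \mathcal{D}/\mathcal{B}_{[0,\infty]}$. Integrating~\eqref{eq:LP-independence:two} pathwise against $(u\mapsto Z_u H(\Delta_u X))$ and taking expectations gives
\begin{equation*}
\PP(Z_R H(\Delta_R X); R<\infty) = \PP \int_{[0,\infty)} Z_u O_u H(\Delta_u X)\, dA_u.
\end{equation*}
Since $\Opt$ is stable under products, $ZO\in \Opt/\mathcal{B}_{[0,\infty]}$, so Lemma~\ref{lemma:IF} applies with $M:=ZO$: hypothesis~(i) of the lemma follows from the boundedness of $H$ combined with $\PP A_\epsilon<\infty$, while~(ii) is part of the standing assumptions. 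The lemma then delivers
\begin{equation*}
\PP(Z_R H(\Delta_R X); R<\infty) = c\,\PP \int_0^1 H(\Delta_u X)\, dA_u\,\PP \int_0^T Z_u O_u\, du,\qquad c:=\lambda/(1-e^{-\lambda}),
\end{equation*}
the crucial point being that the $H$-dependence and $Z$-dependence are now completely decoupled.

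Specializing to $Z\equiv 1$, to $H\equiv 1$, and to $Z\equiv 1\equiv H$ in turn yields three analogous formulas for $\PP(H(\Delta_R X); R<\infty)$, $\PP(Z_R; R<\infty)$ and $\PP(R<\infty)$ respectively. Multiplying the first two, comparing with the original formula multiplied by $\PP(R<\infty)$, and cancelling the common factor $c\,\PP A_1\,\PP\int_0^T O_u\, du$ produces the identity $\PP(R<\infty)\,\PP(Z_R H(\Delta_R X); R<\infty) = \PP(Z_R; R<\infty)\,\PP(H(\Delta_R X); R<\infty)$, which is precisely the $\PP'$-independence of $Z_R$ from $H(\Delta_R X)$. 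Since $\FF_R'$ is generated on $\{R<\infty\}$ by the family $\{Z_R: Z\in \Opt/\mathcal{B}_\mathbb{R}\}$, a monotone class argument promotes this to the independence of $\FF_R'$ from $\Delta_R X$ under $\PP'$. Dividing the $Z\equiv 1$ formula by the $Z\equiv 1\equiv H$ formula then yields $\PP'(H(\Delta_R X)) = \PP\int_0^1 H(\Delta_u X)\, dA_u/\PP A_1$, the asserted law.

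The real work has been done inside Lemma~\ref{lemma:IF}; what remains is only a standard monotone-class/monotone-convergence extension from bounded $Z$ and $H$ to arbitrary nonnegative measurable ones, together with a brief check of the degenerate boundary cases: $\lambda=\infty$ (i.e.\ $T\equiv 0$) forces $A\equiv 0$, and then~\eqref{eq:LP-independence:two} forces $R=\infty$ a.s., contradicting $\PP(R<\infty)>0$; and $\lambda=0$ (i.e.\ $T\equiv \infty$) is handled by taking the limit $c\to 1$ in the displayed formula. I do not foresee any genuine obstacle beyond this bookkeeping, as the entire argument is essentially a one-application corollary of the incremental-functional identity established in Lemma~\ref{lemma:IF}.
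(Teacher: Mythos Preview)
Your proposal is correct and follows essentially the same route as the paper: rewrite $\PP(Z_R H(\Delta_RX);R<\infty)$ via \eqref{eq:LP-independence:two} as $\PP\int Z_uO_uH(\Delta_uX)\,dA_u$, apply Lemma~\ref{lemma:IF} with the optional integrand $ZO$ to factor the result, and then specialize $Z\equiv 1$, $H\equiv 1$, $Z\equiv 1\equiv H$ exactly as in the proof of Proposition~\ref{proposition:necessity}\ref{proposition:necessity:ii}. Your treatment of the boundary case $\lambda=\infty$ and the preliminary restriction to bounded $Z,H$ are minor elaborations the paper leaves implicit (note that hypothesis~(i) of Lemma~\ref{lemma:IF} is in fact automatic from $\PP A_\epsilon<\infty$, since $H\land N\le N$), but nothing is wrong.
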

The proof of Proposition~\ref{proposition:sufficient:two} will follow on p.~\pageref{proof:thin-random-times} after we have given  some remarks and examples. %Example~\ref{example:at-sup} below. 
\begin{remark}\label{remark:sufficient:two}
\leavevmode
\begin{enumerate}
\item The assumptions of Proposition~\ref{proposition:sufficient:two} imply $\llbracket R\rrbracket \llbracket S\rrbracket =\emptyset$ up to evanescence for any $\FF$-stopping time $S$. In particular, they are are mutually exclusive with those of Proposition~\ref{proposition:necessity}\ref{proposition:necessity:ii}.

\item\label{cts-sufficient-rmks:ii} In the setting of Proposition~\ref{proposition:sufficient:two}, the `canonical' situation to have in mind, is the one in which $T=\infty$ a.s. and there is a $\Gamma\in \mathcal{D}$, with the jump process $\Delta A$ of $A$ satisfying $\{\Delta A>0\}=\{\blacktriangle X\in \Gamma\}$ (see the examples following). Note that in such case the assumptions of Proposition~\ref{proposition:sufficient:two} imply $\llbracket R\rrbracket=\{O>0\}\{\blacktriangle X\in \Gamma\}$ up to evanescence.% where $\{O>0\}\in \mathcal{O}$.

\item The presence of the ITT $T$ in the statement of Proposition~\ref{proposition:sufficient:two} allows some non-trivial added generality. For instance, independent exponential killing: if for the random time $R$, an IF $A$ up to some $T$  and an $O$ have been found satisfying the assumptions of Proposition~\ref{proposition:sufficient:two}, and if $e\in \mathcal{G}/\mathcal{B}_{[0,\infty]}$ is an exponentially distributed time independent of $\FF_\infty\lor \sigma(A,T)$, then $A^e$ and $O$ also satisfy the assumptions of Proposition~\ref{proposition:sufficient:two}, but for $R_{\{R<e\}}$ replacing $R$ (see Remark~\ref{remark:IF}\eqref{remark:IF:iv}). 

\item \eqref{eq:LP-independence:two} is a continuous-time analogue of \eqref{eq:RW-independence}: indeed the latter can be written as $\delta_R\mathbbm{1}(R<\infty)=Z\cdot \mu$ with $\mu=\sum_{n\in \mathbb{N}_0}\delta_n\mathbbm{1}_{\Gamma}(\Delta_n X)$ and $Z=\sum_{n\in \mathbb{N}_0}\mathbbm{1}_{\llbracket n\rrbracket}\mathbbm{1}_{F_n}$. 

\item Another way of writing \eqref{eq:RW-independence} is as $\llbracket R\rrbracket = O\{\blacktriangle X\in \Gamma\}$ up to evanescence, with $O=\cup_{n\in \mathbb{N}_0}F_n\times \{n\}$. This, item \eqref{cts-sufficient-rmks:ii}, and the final statement of Proposition~\ref{proposition:necessity}, suggest the following as being the most natural analogue of  \eqref{eq:RW-independence}  in continuous time: $\llbracket R\rrbracket=O \{\blacktriangle X\in \Gamma\}$ up to evanescence for an $O\in \mathcal{O}$ and a $\Gamma\in \mathcal{D}$. Establishing the precise relationship (conjectured equivalence, when $R\in \overline{\sigmađ(X)}/\mathcal{B}_{[0,\infty]}$) between the property of $R$ being an independence time for $X$ and the latter condition, escapes the author in general (Proposition~\ref{proposition:necessity} establishes this equivalence for thin random times).  Note that a `set-theoretic' condition of the sort ``$\llbracket R\rrbracket=O \{\blacktriangle X\in \Gamma\}$'' should typically lend itself well to a check in application. For instance, when $X$ is a L\'evy process for which $0$ is transient and regular for itself, and $R$ is the last time $X$ is equal to $0$, then one may simply write (since $X$ is a.s. continuous at the left (and right) end-points of its zero free intervals \cite[Eq.~(2.9)]{millar-germ}) $\llbracket R\rrbracket=\{X=0\}\{\blacktriangle X\vert_{\Omega\times (0,\infty)}\ne 0\}$ (up to evanescence).
\end{enumerate}
\end{remark}
\begin{example}
Let $X$ be the difference between a strictly positive unit drift and a non-zero compound Poisson subordinator whose paths are all piecewise constant (to avoid measurability issues). Let $R$ be the unique time at which $X$ is one temporal unit away from jumping for the second time and $X$ is above zero, if there is such a time, $R=\infty$ otherwise. Note that $\PP(R<\infty)>0$. Now let $A$ be the right-continuous nondecreasing process vanishing a.s. at zero which increases by one at precisely those times $t\in [0,\infty)$ when $\Delta_tX$ performs the following: drifts for one spatial unit, and then jumps. By Remark~\ref{remark:IF}\eqref{remark:IF:ii}, $A$ is an IF up to $\infty$. Clearly $\PP A_1<\infty$ and by the strong Markov property of $X$ it follows that $\PP((\Delta A)_S;S<\infty)=0$ for all $\FF$-stopping times $S$. Let next $N$ be the right-continuous adapted process that counts the number of jumps of $X$ by, inclusive of, a given time; finally let $O:=\mathbbm{1}(N=1,X\geq 0)$ record the times when $X$ has had one jump to-date and is not below zero. Then $O\in \Opt/2^{\{0,1\}}$ and \eqref{eq:LP-independence:two} obtains. So $R$ is an independence time for $X$. 
\end{example}

\begin{example}\label{example:at-sup}%Generalized post-submission
This is a complement to Example~\ref{example:thin-times:W-H}. %, whose setting we retain. However we assume, ceteris paribus, that $0$ is not irregular for itself for the process $\overline{X}-X$, and indeed that moreover $X$ is Brownian motion with drift (the drift being strictly negative when $\lambda=0$). % and that $0$ is regular for $(-\infty,0)$ for the process $X$ (when the latter fails, then with a positive probability \cite[discussion following Lemma~VI.6]{bertoin} $X$ may jump down at the time $R$, and (intuitively) there can be no hope for the independence of $\FF'_R$ from $\Delta_R X$; though a suitable statement can still be made in this case \cite[Lemma~VI.6]{bertoin} it falls outside of our study). 
Assume $d=1$ and $X$ is sample path continuous (so a linear Brownian motion with drift) and suppose $\FF$ is the completed natural filtration of $X$. Let $e$ be an independent exponentially with parameter $\lambda\in [0,\infty)$ distributed random time ($e=\infty$ a.s. when $\lambda=0$) and let $R=\sup\{t\in [0,e):X_t=\overline{X}_t\}$ be the last time on $[0,e)$ that $X$ is at its running supremum $\overline{X}$.  We assume of course the drift of $X$ is strictly negative when $\lambda=0$ so that $\PP(R<\infty)>0$ (and indeed $=1$). For $\epsilon\in (0,\infty)$, let $A^\epsilon$ be the right-continuous nondecreasing process vanishing a.s. at zero that increases by $1$ at precisely those $t\in [0,e)$ for which $\Delta_tX$ hits the level $\epsilon$ before hitting zero, and then does not return to $\epsilon$ strictly before time $e-t$. Since between any two increases of $A^\epsilon$, $X$ must ascend by $\epsilon$ and then decrease by at least $\epsilon$, it follows that the set of such times is locally finite in $[0,\infty)$ and that moreover $\PP(A_1^\epsilon)<\infty$. By the strong Markov property of $X$, $\PP((\Delta A)_S^\epsilon;S<\infty)=0$ for all $\FF$-stopping times $S$. By Remark~\ref{remark:ITT}\eqref{remark:ITT:iii} $e$ is an ITT. By the simple Markov property of $X$ and the memoryless property of $e$,  $A^\epsilon$ is an IF up to $e$. Set $O^\epsilon:=\mathbbm{1}(X+\epsilon\geq \overline{X})$. Clearly $O^\epsilon\in \Opt/2^{\{0,1\}}$. Let $R^\epsilon$ be equal to the last time on $[0,e)$ that we are, in spatial terms, $\epsilon$ away $\overline{X}_e$ ($R^\epsilon=\infty$, when there is no such time). By Proposition~\ref{proposition:sufficient:two} we obtain that $R^\epsilon$ is an independence time for $X$. In particular, for all continuous bounded $Z\in \mathcal{O}/\mathcal{B}_\mathbb{R}$, all $n\in \mathbb{N}$, all real $0<t_1<\cdots <t_n$, and all bounded continuous $h:\mathbb{R}^n\to \mathbb{R}$: \footnotesize $$\PP(R^\epsilon<\infty)\PP(Z_{R^\epsilon} h((\Delta_{R^\epsilon}X)_{t_1},\ldots, (\Delta_{R^\epsilon}X)_{t_n});R^\epsilon<\infty)=\PP(Z_{R^\epsilon};R^\epsilon<\infty)\PP(h((\Delta_{R^\epsilon}X)_{t_1},\ldots,(\Delta_{R^\epsilon}X)_{t_n});R^\epsilon<\infty).$$\normalsize
As a.s. $R_\epsilon\to R$ as $\epsilon\downarrow 0$, we may pass to the limit. Finally, since in a Brownian filtration optionality is synonymous with predictability, a monotone class argument allows to conclude that $R$ is an independence time for $X$ (cf. \cite[Lemma~VI.6(ii)]{bertoin}). 
\end{example}
%\begin{example}
%Assume $X$ is a linear Brownian motion with (possibly vanishing) drift and $\FF$ the completed natural filtration of $X$. Call a time $t\in [0,\infty)$ a standard crossing time if $\Delta_tX$ hits $\{-1,1\}$ before hitting zero. Let $R$ be the time of the first standard crossing ($R=\infty$ on the event that there is no such time). Denote by  $\overline{X}$ ($\underline{X}$) the running supremum (infimum) process of $X$.  By considering, for $\epsilon\in (0,\infty)$, $R^\epsilon:=\mathbbm{1}(\{\overline{X}-\underline{X}<1\}\{\epsilon\in \{X-\underline{X},\overline{X}-X\}\})$ and by letting $A^\epsilon$ be the right-continuous nondecreasing process that increases by $1$ precisely at those instances in time $t\in [0,\infty)$ when $\Delta_tX$ ascends by $\epsilon$ before hitting $0$ and then descends by $1$ before hitting $\epsilon$ again \emph{or} descends by $\epsilon$ before hitting $0$ and then ascends by $1$ before hitting $-\epsilon$ again, one finds as in Example~\ref{example:at-sup}, that $R$ is an independence time for $X$.
%\end{example}
\begin{proof}[Proof of Proposition~\ref{proposition:sufficient:two}]\label{proof:thin-random-times}
Set $\lambda:=(\PP T)^{-1}\in [0,\infty)$. Let $M\in \mathcal{O}/\mathcal{B}_{[0,\infty]}$ and $H\in \mathcal{D}/\mathcal{B}_{[0,\infty]}$. Then by Lemma~\ref{lemma:IF}, $$\PP(M_RH(\Delta_RX);R<\infty)=\PP\left(\int M_uH(\Delta_uY)\delta_R(du);R<\infty\right)=$$ 
$$\PP\left(\int_{[0,\infty)} M_uH(\Delta_uX)O_udA_u\right)=\PP\int_0^1H(\Delta_uX)dA_u\PP\int_0^T M_uO_udu\frac{\lambda}{1-e^{-\lambda}}.$$One concludes as in the proof of  Proposition~\ref{proposition:necessity}\ref{proposition:necessity:ii}. 
\end{proof}

\begin{remark}
The preceding proposition was inspired by, and should be compared with \cite[Theorem~5.7]{sharpe-getoor}. There is an extra possibility in the latter, which in our case would correspond to  $\delta_R\mathbbm{1}(R<\infty)$ having a mixture of the two forms, \eqref{eq:LP-independence:two} and the one from Proposition~\ref{proposition:necessity}\ref{proposition:necessity:ii}, by separating them according to a subset $M\subset \Omega\times [0,\infty)$ that is both optional and also of the form $\{\blacktriangle X\in  \Gamma\}$ for a $\Gamma\in \mathcal{D}$. But if $M$ is such a set, then, as $S$ ranges over the class of finite $\FF$-stopping times, $\mathbbm{1}_M(S)=\mathbbm{1}_\Gamma(\Delta_SX)$ is at the same time $\FF_S$ measurable and, by the strong Markov property, independent of $\FF_S$ and of constant expectation. It is to say that the process $\mathbbm{1}_M$ is a.s. equal to zero at every finite stopping time $S$ or is a.s. equal to one at every finite stopping time $S$. Hence any such $M$ equals $\emptyset$ or $\Omega\times [0,\infty)$ up to evanescence \cite[Corollary~4.11]{semimtgs}, and the separation would be trivial (see, however, Lemma~\ref{lemma:restriction}\ref{restriction:II}). Indeed, the separation in \cite[Theorem~5.7]{sharpe-getoor} happens according to an optional homogeneous (in the sense of Markov processes, e.g. \cite[p. 315]{sharpe-getoor}) set and, by contrast, there are many non-trivial sets of such a form (e.g. $\{X\in \Gamma\}$ for $\Gamma\in \mathcal{B}_{\mathbb{R}^d}$).
\end{remark}

\begin{remark}\label{remark:markov-connect}
In special cases, the `conditional independence' results from the Markov theory (see Subsection~\ref{subsection:lit-overview}) may also be applied  successfully  in the present context  -- modulo said results' provisions. Either directly, when $X_R$ is trivial, or indirectly when there is some other strong Markov process $Z$ for which $Z_R$ is trivial and for which $R$ is a conditional independence time if and only if it is an independence time for $X$.  For instance, $X$ reflected in its running supremum is known to be strong Markov and a sample-path continuous process $˛Z$ is, for $a$ from its state space, equal to $a$ on its last visit to $a$ (viz. Example~\ref{example:at-sup}).
\end{remark}

%\begin{remark}
%If $X$ is a sample-path continuous Brownian motion with strictly negative drift, and $R$ is the last time $X$ is at zero, then by Remark~\ref{remark:cts:indep}\eqref{remark:cts:indep:ii} and the strong Markov property, $R$ cannot satisfy \eqref{eq:LP-independence}. Yet $R$ is known to be an independence time \cite[Proposition~5.5]{millar}. It is not so clear whether $R$ satisfies \eqref{eq:LP-independence:two}. It seems that it does not.
%\end{remark}

We conclude by characterizing regenerative/Markov times as stopping times. 

\begin{lemma}\label{lemma:generators}
Assume $\mathcal{G}=\sigma(X)$ (respectively, up to negligible sets). Then  $\mathcal{G}\otimes \mathcal{B}_{[0,\infty)}=\mathcal{O}\lor \sigma_{\Omega\times [0,\infty)}(\blacktriangle X)$ (respectively, up to evanescent sets). 
\end{lemma}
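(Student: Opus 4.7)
The plan is to prove the two inclusions separately. The easy direction $\mathcal{O} \vee \sigma_{\Omega \times [0,\infty)}(\blacktriangle X) \subset \mathcal{G} \otimes \mathcal{B}_{[0,\infty)}$ (up to evanescence in the completed case, since negligible sets in $\Omega$ multiplied by $[0,\infty)$ are evanescent) follows from standard joint measurability: every adapted c\`adl\`ag process is $\mathcal{G} \otimes \mathcal{B}_{[0,\infty)}$-measurable, and $\blacktriangle X$ is $\mathcal{G} \otimes \mathcal{B}_{[0,\infty)} / \mathcal{D}$-measurable because $\mathcal{D}$ is generated by canonical projections and $(\omega, t) \mapsto X_{t+s}(\omega) - X_t(\omega)$ is jointly measurable for each fixed $s$.

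For the forward direction, set $\mathcal{H} := \mathcal{O} \vee \sigma_{\Omega \times [0,\infty)}(\blacktriangle X)$. The observation $\Omega \times C \in \mathcal{O}$ for every $C \in \mathcal{B}_{[0,\infty)}$ (approximate $C$ by indicators of right-half-open intervals, whose characteristic functions give deterministic adapted c\`adl\`ag processes) reduces the task to showing that, for each $u \in [0, \infty)$, the constant-in-$t$ function $F^u : (\omega, t) \mapsto X_u(\omega)$ is $\mathcal{H}$-measurable. I will split $F^u$ across the cutoff $t = u$. For $t \geq u$, the process $(\omega, t) \mapsto X_u(\omega) \mathbbm{1}_{[u,\infty)}(t)$, namely $0$ on $[0, u)$ and $X_u$ on $[u, \infty)$, is adapted and c\`adl\`ag, hence optional. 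For $t \leq u$, I exploit the pointwise identity
\[
X_u(\omega) = X_t(\omega) + (\blacktriangle X)_t(\omega)(u - t);
\]
its first summand is optional (since $X$ itself is adapted and c\`adl\`ag), while the second is the composition of $(\omega, t) \mapsto ((\blacktriangle X)_t(\omega), u - t)$, which is $\mathcal{H} / (\mathcal{D} \otimes \mathcal{B}_{[0,\infty)})$-measurable, with the path-evaluation map $\mathbb{D} \times [0, \infty) \ni (p, s) \mapsto p(s)$, whose joint measurability is a standard consequence of c\`adl\`aguity via right-continuous dyadic approximation. Multiplying the two pieces by the appropriate (deterministic, hence optional) time-indicators and summing exhibits $F^u$ as $\mathcal{H}$-measurable.

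It follows that $\{X_u \in B\} \times [0, \infty) \in \mathcal{H}$ for all $u$ and $B$; combining with the previously noted $\Omega \times C \in \mathcal{H}$ yields $\sigma(X) \otimes \mathcal{B}_{[0,\infty)} \subset \mathcal{H}$, and hence, under the hypothesis $\mathcal{G} = \sigma(X)$, $\mathcal{G} \otimes \mathcal{B}_{[0,\infty)} \subset \mathcal{H}$ (up to evanescence in the completed case). I do not foresee a serious obstacle: the conceptual core is the elementary fact that the \emph{optional information up to time $u$} together with the \emph{incremental information after $u$} together reconstruct the entire path, which is precisely what the decomposition above turns into a measurability statement.
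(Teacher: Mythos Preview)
Your argument is correct and rests on exactly the same identity the paper exploits, namely $X_u = X_t + (\blacktriangle X)_t(u-t)$ for $t\le u$, which splits a ``future'' value of $X$ into an optional summand and a $\sigma_{\Omega\times[0,\infty)}(\blacktriangle X)$-measurable summand. The only difference is in packaging: the paper runs a monotone-class argument on the multiplicative generators $\prod_{k}e^{i\alpha_k\cdot X_{r_k}}\mathbbm{1}_{\llbracket r,\infty\rrparenthesis}$ and decomposes them at a \emph{fixed} cutoff $r$, whereas you work with the single-time constant maps $F^u(\omega,t)=X_u(\omega)$ and decompose at the \emph{running} time $t$. Your version buys a slightly more elementary generating class (no Fourier characters, no products over several times) at the cost of having to invoke the joint measurability of the evaluation map $\mathbb{D}\times[0,\infty)\ni(p,s)\mapsto p(s)$, which you correctly justify via right-continuous dyadic approximation; the paper's fixed-cutoff formulation sidesteps this by only ever evaluating paths at the fixed arguments $r_k-r$.
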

\begin{proof}
Since every optional process is measurable and by Remark~\ref{remark:restriction}\eqref{remark:restriction:i}, $\mathcal{G}\otimes \mathcal{B}_{[0,\infty)}\supset \mathcal{O}\lor \sigma_{\Omega\times [0,\infty)}(\blacktriangle X)$. For the reverse inclusion, by monotone class, it suffices to show that for $n\in \mathbb{N}_0$, real numbers $0= r_0<\cdots <r_n$, $\{\alpha_0,\cdots,\alpha_n\}\subset \mathbb{R}^d$ and $r\in [0,\infty)$, the process $Z:=\prod_{k=0}^n e^{i \alpha_k\cdot X_{r_k}}\mathbbm{1}_{\llbracket r,\infty\rrparenthesis}$ belongs to $\mathcal{O}\lor \sigma_{\Omega\times [0,\infty)}(\blacktriangle X)/\mathcal{B}_{\mathcal{\mathbb{C}}}$. Set $r_{n+1}:=\infty$ and let $j\in \{0,\ldots,n\}$ be the unique index for which $r\in [r_j,r_{j+1})$. Conclude by identifying $Z_t=\mathbbm{1}_{[r,\infty)}(t)\left(e^{i(\alpha_{j+1}+\cdots+ \alpha_n)\cdot X_r}\prod_{k=0}^je^{i \alpha_k\cdot X_{r_k}}\right)\left(\prod_{k=j+1}^ne^{i \alpha_k\cdot ((\blacktriangle X)_{r\land t}(r_k-r))}\right)$ for $t\in [0,\infty)$ and noting that deterministic stopping is $\mathcal{D}/\mathcal{D}$-measurable.
\end{proof}

\begin{theorem}\label{theorem:characterization:STs:cts}
For $x\in \mathbb{R}^d$, denote by $\PP^x$ be the law of $x+X$ under $\PP$ (so $\LL=\PP^0$), $\mathcal{P}:=(\PP^x)_{x\in \mathbb{R}^d}$. Of the following statements, \ref{cts:ii} implies \ref{cts:i}, \ref{cts:o} and \ref{cts:i} are equivalent. If in addition $R\in \overline{\sigma(X)}/\mathcal{B}_{[0,\infty]}$, then \ref{cts:i} implies \ref{cts:ii}.

\begin{enumerate}[(i)]
\item \label{cts:o} Under $\PP'$, conditionally on $X_R$, $\theta_R X$ is independent of $\FF_R'$ and has kernel law $\mathcal{P}$. 
\item\label{cts:i} $\Delta_RX$ is independent of $\FF_R'$ under $\PP'$ and the law of $\Delta_RX$ under $\PP'$  is $\LL$.
\item\label{cts:ii} $R$ is a stopping time relative to the completion of $\FF$. 
\end{enumerate}
\end{theorem}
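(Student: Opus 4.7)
The three implications reduce to the strong Markov property, routine conditional-independence bookkeeping, and the thin/strict decomposition established just prior to Proposition~\ref{proposition:necessity}, with the strict case being the main technical hurdle.

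For \ref{cts:ii}$\Rightarrow$\ref{cts:i} I would invoke the classical strong Markov property of the L\'evy process $X$ at any $\overline{\FF}$-stopping time --- dyadic approximation of $R$ from above, the simple Markov property at each approximant, and passage to the limit via right-continuity of the paths. The equivalence \ref{cts:o}$\Leftrightarrow$\ref{cts:i} is then routine conditional-independence manipulation: noting that $X_R\in\FF_R'/\mathcal{B}_{\mathbb{R}^d}$ (since $X$ is c\`adl\`ag adapted, hence optional), I would use the identity $\theta_RX=X_R+\Delta_RX$ on $\{R<\infty\}$ together with $\PP^x=\delta_x\ast\LL$ to convert the conditional statement \ref{cts:o} into the unconditional statement \ref{cts:i}, and conversely.

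The main content is \ref{cts:i}$\Rightarrow$\ref{cts:ii} under $R\in\overline{\sigma(X)}/\mathcal{B}_{[0,\infty]}$. The plan is to apply the thin/strict decomposition preceding Proposition~\ref{proposition:necessity}: choose $A\in\Opt$ by Zorn and set $\llbracket R_1\rrbracket:=\llbracket R\rrbracket A$ (thin), $\llbracket R_2\rrbracket:=\llbracket R\rrbracket A^c$ (strict). Lemma~\ref{lemma:restriction}\ref{restriction:I}\ref{restriction:a} (applied to $A$ and to $A^c$) propagates \ref{cts:i} to each $R_i$: $\Delta_{R_i}X$ is independent of $\FF_{R_i}'$ under $\PP^{R_i}:=\PP(\cdot\mid R_i<\infty)$ with law $(\Delta_R X)_\star\PP'=\LL$. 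For the thin part $R_1$ I would then apply Proposition~\ref{proposition:necessity}\ref{proposition:necessity:i}, whose measurability hypothesis holds because $R_1\in\overline{\sigma(X)}$: for each $\FF$-stopping time $S$ with $\PP(R_1=S<\infty)>0$ it yields $\{R_1=S<\infty\}=F_S\{\Delta_SX\in G_S\}$ a.s.\ with $(\Delta_{R_1}X)_\star\PP^{R_1}=\LL(\cdot\mid G_S)$; the law-$\LL$ condition forces $\LL(G_S)=1$, whence $\{R_1=S<\infty\}=F_S$ a.s. Along a covering denumerable family $(S_i)_{i\in\mathbb{N}}$ of $\FF$-stopping times, and after lifting each $F_{S_i}$ to some $\widetilde F_{S_i}\in\FF_{S_i}$, I read off $\{R_1\leq t\}=\bigcup_i\widetilde F_{S_i}\{S_i\leq t\}\in\overline{\FF_t}$ for every $t\in[0,\infty)$, so $R_1$ is an $\overline{\FF}$-stopping time.

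The hard part will be to show $\PP(R_2<\infty)=0$, i.e., to rule out strict independence times $R_2$ with $(\Delta_{R_2}X)_\star\PP^{R_2}=\LL$. The intended strategy is to exploit Lemma~\ref{lemma:generators}: reducing WLOG to $\mathcal{G}=\overline{\sigma(X)}$ yields $\mathcal{G}\otimes\mathcal{B}_{[0,\infty)}=\Opt\lor\sigma_{\Omega\times[0,\infty)}(\blacktriangle X)$ up to evanescence, while Remark~\ref{remark:restriction} turns the independence in \ref{cts:i} into $\Opt\perp\sigma(\blacktriangle X)$ under $\PP^{R_2}$, with the $\sigma(\blacktriangle X)$-marginal pinned down by $(\Delta_{R_2}X)_\star\PP^{R_2}=\LL$. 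Since $\PP^{R_2}$ is concentrated on $\llbracket R_2\rrbracket\in\Opt\lor\sigma_{\Omega\times[0,\infty)}(\blacktriangle X)$, the product-like structure should force $\llbracket R_2\rrbracket$ to coincide modulo evanescence with a purely optional set --- rendering $R_2$ the d\'ebut of an $\Opt$-set and hence an $\overline{\FF}$-stopping time, which would contradict strictness unless $\PP(R_2<\infty)=0$. Converting this heuristic into a rigorous argument --- the continuous-time analogue of the ``$G=E^{\mathbb{N}}$ $\LL$-a.s.''\ collapse in the proof of Theorem~\ref{theorem} --- is the principal technical hurdle I anticipate.
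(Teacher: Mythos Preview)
Your proposal diverges from the paper's proof at the crucial implication \ref{cts:i}$\Rightarrow$\ref{cts:ii}. The paper does \emph{not} decompose $R$ into thin and strict parts. Instead, after the same WLOG reduction to $\GG=\overline{\sigma(X)}$ and $\FF$ the completed natural filtration, it invokes the general-theory criterion (He--Wang--Yan, cited as \cite[Theorem~5.13]{semimtgs}) that $R$ is an $\overline{\FF}$-stopping time iff the measure $\mu_R(M):=\PP(M_R;R<\infty)$ on $\GG\otimes\mathcal{B}_{[0,\infty)}$ is optional, i.e.\ $\mu_R(M)=\mu_R({}^oM)$ for all measurable $M$. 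By Lemma~\ref{lemma:generators} and monotone class it suffices to test $M=Z\,H(\blacktriangle X)$ with $Z$ optional and $H\in\mathcal{D}/\mathcal{B}_{[0,\infty]}$; the strong Markov property gives ${}^oM=Z\,\LL(H)$, and then the hypothesis of \ref{cts:i} yields $\mu_R(M)=\PP(Z_RH(\Delta_RX);R<\infty)=\PP(Z_R;R<\infty)\LL(H)=\mu_R({}^oM)$ in one line. No case split is needed.

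Your thin case is correct, if more laborious than necessary. Your strict case, however, is a genuine gap, and you flag it yourself. The heuristic ``$\llbracket R_2\rrbracket\in\Opt\lor\sigma(\blacktriangle X)$ plus product-like independence under $\PP^{R_2}$ forces $\llbracket R_2\rrbracket$ to be optional up to evanescence'' does not go through as stated: a generic element of $\Opt\lor\sigma(\blacktriangle X)$ is not a rectangle $O\cap\{\blacktriangle X\in G\}$, and independence of two sub-$\sigma$-fields under the degenerate measure $\PP^{R_2}$ (which is supported on the graph $\llbracket R_2\rrbracket$) does not by itself produce a factorisation of that graph. The discrete-time collapse ``$\LL(G)=1$'' in Theorem~\ref{theorem} worked because each $\{R=n\}$ automatically decomposes along the product $\FF_n\otimes\sigma(\Box_nY)$; there is no continuous-time analogue of this for strict times, which is precisely why Proposition~\ref{proposition:necessity}\ref{proposition:necessity:i} is stated only for stopping-time sections. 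The optional-measure argument is exactly the device that replaces this missing factorisation, and once you have it, the thin/strict split becomes superfluous.
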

%\begin{remark}
%If \ref{cts:i} holds, then it holds \emph{a fortiori} for the natural filtration of $X$. Thus if $R\in \overline{\sigma(X)}/\mathcal{B}_{[0,\infty]}$, then in fact \ref{cts:i} implies that $R$ is a stopping time relative to the completion of the natural filtration of $X$. 
%\end{remark}

\begin{proof}
The equivalence of  \ref{cts:o} and \ref{cts:i}  follows by routine manipulation of conditional independence, using the fact that $X_R\in \FF'_R/\mathcal{B}_{\mathbb{R}^d}$. Condition \ref{cts:ii} is known to be sufficient for \ref{cts:i}. Necessity. Since the completed natural filtration of $X$ is right-continuous (every L\'evy process is a Feller process), we may assume without loss of generality that the filtered probability space $(\Omega,\GG,\PP,\FF)$ satisfies the ``usual hypotheses'' of the general theory of stochastic processes with $\FF$ the completed natural filtration of $X$ and $\GG=\overline{\sigma(X)}$. Consider the measure $\mu_R$ on $\mathcal{G}\otimes \mathcal{B}_{[0,\infty)}$ associated to the raw increasing process $\mathbbm{1}_{\llbracket R,\infty\rrparenthesis}$: $\mu_RM=\PP(M_R;R<\infty)$ for $M\in \mathcal{G}\otimes \mathcal{B}_{[0,\infty)}/\mathcal{B}_{[0,\infty]}$. It suffices \cite[Theorem~5.13]{semimtgs} to establish $\mu_R$ is an optional measure, i.e. that for $M\in \mathcal{G}\otimes \mathcal{B}_{[0,\infty)}/\mathcal{B}_{[0,\infty]}$, $\mu_RM=\mu_R({}^oM)$ with ${}^oM$ the optional projection of $M$. By monotone class and Lemma~\ref{lemma:generators}, it suffices to check it when $M=ZH(\blacktriangle X)$ with $Z\in \mathcal{O}/\mathcal{B}_{[0,\infty]}$ and $H\in \mathcal{D}/\mathcal{B}_{[0,\infty]}$. In such case, by the strong Markov property of $X$, for all $\FF$-stopping times $S$, a.s. on $\{S<\infty\}$, one has $\PP(M_S\vert \FF_S)=\PP(Z_SH(\Delta_S X)\vert \FF_S)=Z_S\LL(H)$, i.e. ${}^oM=Z\LL(H)$. Now, since $R$ is an independence time for $X$, indeed $\mu_R M=\PP(Z_R H(\Delta_RX);R<\infty)=\PP(Z_R;R<\infty)\LL(H)=\mu_R({}^oM)$.  
\end{proof}
\bibliographystyle{amsplain}
\bibliography{Biblio_strong_markov}

\end{document}